\documentclass[12pt,twoside,reqno]{amsart}

\usepackage{geometry}
\usepackage{setspace}

\usepackage[dvipsnames]{xcolor} 

\usepackage{enumitem} 
\usepackage{comment}

\usepackage{mathtools,amssymb,amsthm,thmtools}
\usepackage{nicematrix}
\NiceMatrixOptions{cell-space-limits=5pt}
\makeatletter
\renewcommand*\env@matrix[1][\arraystretch]{%
  \edef\arraystretch{#1}%
  \hskip -\arraycolsep
  \let\@ifnextchar\new@ifnextchar
  \array{*\c@MaxMatrixCols c}}
\makeatother

\usepackage{thm-restate}
\newcommand\numberthis{\addtocounter{equation}{1}\tag{\theequation}}

\usepackage{hyperref,cleveref}
\hypersetup{
    colorlinks=true,
    citecolor=Green,
    linkcolor=blue,
    filecolor=magenta,      
    urlcolor=cyan,
    pdftitle={},
    pdfpagemode=FullScreen,
}

\usepackage[backend=biber,style=alphabetic,maxbibnames=5]{biblatex}
\newtheorem{thrm}{Theorem}[section]

\newtheorem{nota}[thrm]{Notation}
\newtheorem{lem}[thrm]{Lemma}
\newtheorem{cor}[thrm]{Corollary}
\newtheorem{conj}[thrm]{Conjecture}
\newtheorem{obs}[thrm]{Observation}

\newtheorem{exam}[thrm]{Example}

\newcommand{\F}{\mathbb{F}}

\usepackage{tikz} 
\usepackage{tikz-cd} 

\title[Colon Ideal Generators \& WLP for Monomial ACI in $3$ Variables]{The Generators of a Colon Ideal with an Application to the Weak Lefschetz Property for Monomial Almost Complete Intersections in Three Variables}

\author[Matthew D. Booth]{Matthew Davidson Booth}
\author[Adela Vraciu]{Adela Vraciu}

\address{Department of Mathematics, University of South Carolina, 
Columbia, SC 29208}
\email{mdbooth@email.sc.edu}
\email{vraciu@math.sc.edu}

\subjclass[2020]{Primary: 13C05, 13E10; Secondary: 05E40, 13D40}
\keywords{Artinian graded algebra, Hilbert function, level ideal, monomial almost complete intersection, weak Lefschetz property}

\begin{document}

\begin{abstract}
Much progress has been made in classifying when the weak Lefschetz property holds for $A=\F[x,y,z]/I$ where $\text{char}(\F)=0$ and $I=(x^{d_{1}},y^{d_{2}},z^{d_{3}},x^{a_{1}}y^{a_{2}}z^{a_{3}})$ is a monomial almost complete intersection. We connect this problem to the setting of two variables through a certain relation. In so doing, we are led to determine explicit formulas for the generators of the colon ideal $(x^{d_{1}},y^{d_{2}}):(x+y)^{a_{3}}$. With these generators in hand, we construct a matrix and show that failure of WLP for $A$ is dictated by the vanishing of a certain polynomial (namely the determinant of our matrix) when $A$ is level. We further show in the level case that a conjecture first posed in \cite{Mig-Mir-Nag-11} is true in a few new cases.
\end{abstract}

\maketitle
\tableofcontents

\section{Introduction}

Let $P=\F[x_{1},\dots,x_{n}]$ where $\F$ is a field of characteristic zero, and let $I$ be an Artinian homogeneous ideal of $P$. Recall that $P/I$ is said to enjoy the \textit{weak Lefschetz property (WLP)} if the multiplication by a general linear form map $\times\ell:(P/I)_{d-1}\rightarrow (P/I)_{d}$ has maximal rank at all degrees $d$. The first ``landmark'' result on the subject of WLP was given in \cite{Stanley-80} (see also \cite{Watanabe}), where it is shown that Artinian monomial complete intersection ideals always enjoy WLP. (In fact, they are shown to have the \textit{strong Lefschetz property, SLP}.) While arbitrary complete intersection ideals are not known to always have WLP, it is conjectured that they do. More recent work has given attention to the class of almost complete intersection ideals, which are on the whole more mysterious. Any almost complete intersection (and more generally any Artinian ideal) in two variables is known to have WLP from Proposition 4.4 in \cite{Har-Mig-Nag-Wat}, so the first case of interest is the three-variable setting.

Let $P=\F[x,y,z]$ and $I=(x^{d_{1}},y^{d_{2}},z^{d_{3}}, x^{a_{1}}y^{a_{2}}z^{a_{3}})$. The natural question is as follows: what conditions on the exponents defining $I$ characterize the presence of the WLP for $P/I$? Early results specific to this type of ideal are Corollary 7.3 in \cite{Brenner} and Theorem 3.3 in \cite{Brenner-Kaid-07}, where the authors determine semistability of the syzygy bundle associated with the ideal and show how this semistability implies the presence or absence of the WLP. Accompanying this analysis are a handful of numerical criteria which appear in subsequent treatments where ideals of the prescribed form are the focal point; two prominent such places are section six in \cite{Mig-Mir-Nag-11} and sections three and four in \cite{Cook-Nagel-21}. These last two sources form the primary point of departure for this paper.

Retaining the setup established thus far, if at least two of $a_{1}$, $a_{2}$, and $a_{3}$ are zero, then $I$ is a complete intersection (and so enjoys WLP). In the case where exactly one of $a_{1}$, $a_{2}$, or $a_{3}$ is zero, it was shown by Migliore, Mir\'o-Roig, and Nagel in \cite{Mig-Mir-Nag-11} (Lemma 6.6) that $P/I$ has WLP as well. This leaves only the possibility that all of $a_{1}$, $a_{2}$, and $a_{3}$ are nonzero for examination, and Theorem 4.10 of \cite{Cook-Nagel-21} takes a large step toward classifying when WLP holds or fails for such ideals.

The main results of the present paper concern level almost complete intersections. By part (iii) of Proposition 6.1 in \cite{Mig-Mir-Nag-11}, these are precisely the algebras satisfying $d_{1}-a_{1}=d_{2}-a_{2}=d_{3}-a_{3}$. The following was originally proposed as part (b) of Conjecture 6.8 in \cite{Mig-Mir-Nag-11}; letting $t$ denote the common value of the differences $d_{i}-a_{i}$, this conjecture appears again as Conjecture 4.13 in \cite{Cook-Nagel-21} to incorporate the two known ``rogue'' cases $(a_{1},a_{2},a_{3},t)\in\big\{(2,9,13,9),\, (3,7,14,9)\big\}$. Specifically, we have:

\begin{conj}\label{cook-nagel}
Consider the ideal $I=(x^{a_1+t}, y^{a_2+t}, z^{a_3+t}, x^{a_1}y^{a_2}z^{a_3})\subseteq\F[x, y, z]$ where $a_1, a_2, a_3, t$ satisfy the following three conditions:
\begin{equation}\label{conditions}
    0<a_1\le a_2 \le a_3\le 2(a_1+a_2),\ \ 3\ \text{divides}\ a_1+a_2+a_3, \ \ \text{and} \ \ t\ge \frac{a_1+a_2+a_3}{3}.
\end{equation}
If $(a_1, a_2, a_3, t)$ is not $(2, 9, 13, 9)$ or $(3, 7, 14, 9)$, then $\F[x, y, z]/I$ fails WLP if and only if $t$ is even, $a_1+a_2+a_3$ is odd, and $a_1$, $a_2$, $a_{3}$ are not all distinct.
\end{conj}

The WLP is known to hold when the assumptions in the conjecture are not satisfied. Indeed, part (a) of Conjecture 6.8 in \cite{Mig-Mir-Nag-11} says WLP holds when any of the three conditions in \eqref{conditions} does not hold, and this part of the conjecture is true by Lemma 6.6, Corollary 6.3, and Lemma 6.7 of the same. The failure of WLP for $(a_1, a_2, a_3, t)$ when $t$ is even, $a_1+a_2+a_3$ is odd, and $a_1=a_2$ or $a_2=a_3$ is given by Corollary 7.4 of \cite{Mig-Mir-Nag-11} (and is recovered by part (b)(3)(V') of Theorem 4.10 in \cite{Cook-Nagel-21}.)

The forward implication of the conjecture is still open, although some cases have been established (see for instance Proposition 4.14 and Corollary 4.16 of \cite{Cook-Nagel-21}). More precisely, the remaining open cases for level algebras are:
\begin{itemize}
    \item $a_1+a_2+a_3$ is odd, $t$ is even, and $a_1<a_2<a_3$;
    \item $a_1+a_2+a_3$ is even, $t$ is odd, and $a_1<a_2$ or $a_2<a_3$.
\end{itemize}

The goal of this paper is to make some progress toward resolving these open cases in Section 3. 
The prelude to these results, however, is a theorem of interest all on its own. Given $P=\F[x,y,z]$ and the ideal $I=(x^{d_{1}},y^{d_{2}},z^{d_{3}},x^{a_{1}}y^{a_{2}}z^{a_{3}})$ with $0<a_{j}<d_{j}$ for $j\in\{1,2,3\}$, consider the homomorphism which maps $z$ to $-(x+y)$. The image of $I$ in $\F[x,y]$ under this map is the ideal $\big(x^{d_{1}},y^{d_{2}},(x+y)^{d_{3}},x^{a_{1}}y^{a_{2}}(x+y)^{a_{3}}\big)$. A homogeneous relation with respect to this ideal then takes the form
\begin{equation*}
    Ax^{d_{1}}+By^{d_{2}}+C(x+y)^{d_{3}}+Dx^{a_{1}}y^{a_{2}}(x+y)^{a_{3}}=0\quad\text{where}\quad A,B,C,D\in \F[x,y].
\end{equation*}
The preceding line indicates that $C(x+y)^{d_{3}-a_{3}}+Dx^{a_{1}}y^{a_{2}}\in (x^{d_{1}},y^{d_{2}}):(x+y)^{a_{3}}$, and the generators of this colon ideal turn out to be useful in settling some of the open cases in \autoref{cook-nagel}. Section 2 is dedicated to proving \autoref{thrm:colon generators in general}, which provides explicit formulas for the generators of the aforementioned colon ideal. Knowing the degrees of these generators will allow us in Section 3 to prove \autoref{polynomials}, which shows that the vanishing of a certain determinant (which is a polynomial in $t$) coincides with failure of WLP for $I=(x^{a_{1}+t},y^{a_{2}+t},z^{a_{3}+t},x^{a_{1}}y^{a_{2}}z^{a_{3}})$ when $t\geq\frac{a_{1}+a_{2}+a_{3}}{3}$. Then Section 3 will conclude (see \autoref{borderline}) by showing that \autoref{cook-nagel} is true when $t=\frac{a_{1}+a_{2}+a_{3}}{3}+1$ provided $a_{3}=2(a_{1}+a_{2})-3a$ and $0\leq a\leq 3$.

The subsequent sections have many parameters to keep track of, so we mention here that all notation we define holds precisely in the section in which it is defined. For instance, the parameter ``$a$'' will play the role of $a_{3}$ in Section 2 but has a different meaning (which we specify) when it appears again in \autoref{borderline}.

In closing the introduction, we remark that the assumption $\text{char}(\F)=0$ is critical at many points in this paper; in particular, we shall routinely use the fact (which can easily be false in positive characteristic) that WLP holds for monomial complete intersections. Certain methods like translating our three-variable problem into a two-variable problem (e.g. \autoref{lem1}) could potentially be adapted to positive characteristic, but we do not generally expect our results to hold verbatim in that setting. Thus, unless otherwise specified, the reader should assume that all fields in our results have characteristic $0$.

\section{Generators of the Colon Ideal}

Fix the polynomial ring $P=\F[x,y]$ where $\F$ is a field of characteristic zero. Binomial coefficients $\binom{m}{r}$ for $r\geq 0$ are interpreted as polynomials in $m$ of degree $r$. The main result of this section is a description of the generators for the colon ideal $(x^{d_{1}},y^{d_{2}}):(x+y)^{a}$. Treating empty sums as $0$, we define four elements of $P$ which take center stage in what follows.

\begin{nota}\label{formulas}
Let $d\geq 2$ be an integer, and let $a$ be a positive integer. For each nonnegative integer $n$, define the following elements of $P$:
\begin{enumerate}
    \item When $a$ is odd,
    \begin{align*}
        F_{1,d,a,n} & =\sum_{i=0}^{d-\frac{a+1}{2}-n}(-1)^{i}\binom{d-1-i}{\frac{a-1}{2}+n}\binom{\frac{a-1}{2}+i}{\frac{a-1}{2}}x^{d-\frac{a+1}{2}-n-i}y^{i},\ \ \text{of degree}\ d-\frac{a+1}{2}-n\, ;\\
        F_{2,d,a,n} & =y^{2}\sum_{i=0}^{d-\frac{a+3}{2}-n}(-1)^{i}\binom{d-3-i}{\frac{a-3}{2}+n}\binom{\frac{a+1}{2}+i}{\frac{a+1}{2}}x^{d-\frac{a+3}{2}-n-i}y^{i},\ \ \text{of degree}\ d-\frac{a-1}{2}-n.
    \end{align*}
    Since $\frac{a-3}{2}+n<0$ when $a=1$ and $n=0$, specially define $F_{2,d,1,0}=(-1)^{d-2}(d-1)y^{d}$.
    \item When $a$ is even,
    \begin{align*}
        G_{1,d,a,n} & =y\sum_{i=0}^{d-\frac{a+2}{2}-n}(-1)^{i}\binom{d-2-i}{\frac{a-2}{2}+n}\binom{\frac{a}{2}+i}{\frac{a}{2}}x^{d-\frac{a+2}{2}-n-i}\,y^{i},\ \ \text{of degree}\ d-\frac{a}{2}-n\, ;\\
        G_{2,d,a,n} & =\sum_{i=0}^{d-\frac{a}{2}-n}(-1)^{i}\binom{d-1-i}{\frac{a-2}{2}+n}\binom{\frac{a-2}{2}+i}{\frac{a-2}{2}}(i-1)x^{d-\frac{a}{2}-n-i}\,y^{i},\ \ \text{of degree}\ d-\frac{a}{2}-n.
    \end{align*}
\end{enumerate}
\end{nota}

(Note that the asserted degrees of these forms assume the corresponding summation is nonempty.) Our goal in this section is to prove the following theorem.

\begin{restatable}{thrm}{generalgens}\label{thrm:colon generators in general}
Consider $(x^{d_{1}},y^{d_{2}}):(x+y)^{a}$ where $2\leq d_{1}\leq d_{2}$ and $1\leq a\leq d_{1}+d_{2}-2$. Setting $k=d_{2}-d_{1}$, the aforementioned colon ideal has the following generators in $P$:
\begin{enumerate}
    \item If $1\leq a\leq k$, then the generators are $x^{d_{1}}$ and
    \begin{equation*}
        H_{d_{1},a,k}:=y^{k-a+1}\sum_{i=0}^{d_{1}-1}(-1)^{i}\binom{d_{1}+a-2-i}{a-1}x^{d_{1}-1-i}y^{i},\ \text{of degree}\ \deg(H_{d_{1},a,k})=d_{2}-a.
    \end{equation*}
    \item If $k+1\leq a\leq d_{1}+d_{2}-2$, then:
    \begin{enumerate}[label=(\roman*)]
        \item When $a-k$ is odd, the generators are $F_{1,d_{2},a-k,k}$ and $F_{2,d_{2},a-k,k}$.
        \item When $a-k$ is even, the generators are $G_{1,d_{2},a-k,k}$ and $G_{2,d_{2},a-k,k}$.
    \end{enumerate}
\end{enumerate}
\end{restatable}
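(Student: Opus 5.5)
The colon ideal $J=(x^{d_1},y^{d_2}):(x+y)^a$ in the two-variable ring $P$ is the annihilator of $(x+y)^a$ in the Artinian complete intersection $C=P/(x^{d_1},y^{d_2})$. So $P/J\cong (x+y)^aC(a)$, and $J$ is again a height-two Gorenstein ideal. Any Artinian Gorenstein ideal in two variables is a complete intersection, so $J$ has exactly two minimal generators, say of degrees $\alpha\le\beta$. Since $P/J$ is a complete intersection, its socle sits in degree $\alpha+\beta-2$ and its length is $\alpha\beta$. My plan is therefore to show three things: the two proposed forms lie in $J$; they have no common factor; and the product of their degrees equals $\dim_\F P/J$. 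A standard fact then finishes the argument: if $f,g\in J$ form a regular sequence and $\deg f\cdot\deg g=\ell(P/J)$, then $(f,g)\subseteq J$ and both quotients have the same length, so $(f,g)=J$.

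\emph{Step 1: the length of $P/J$.} I compute $\ell(P/J)=\dim_\F (x+y)^aC=\sum_j \operatorname{rank}\big(\times(x+y)^a: C_{j-a}\to C_j\big)$. Because $\mathrm{char}\,\F=0$, $C$ has the strong Lefschetz property (Stanley), and $x+y$ is a Lefschetz element by symmetry. Hence each of these maps has maximal rank, and the sum is a routine count of $\min$'s of the Hilbert function of $C$, which is $\min(j+1,d_1,d_1+d_2-1-j)$ truncated. In case (1), $a\le k$, I expect $\ell=d_1(d_2-a)$, matching the degrees $d_1$ and $d_2-a$. In case (2), writing $a-k=2m+1$ or $2m$, the two degrees should be $d_2-m-1-k$ and $d_2-m-k$, or $d_2-m-k$ twice. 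These are consecutive or equal, as forced by the socle degree $d_1+d_2-2-a$ of $P/J$. I will check that their product equals the computed length. This step also shows that the degrees must be exactly these, which serves as a sanity check.

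\emph{Step 2: membership.} This is the main obstacle. To show $f(x+y)^a\in(x^{d_1},y^{d_2})$, I must show that for every monomial $x^py^q$ of $f(x+y)^a$ with $p<d_1$ and $q<d_2$ the coefficient vanishes. That coefficient is an alternating convolution sum $\sum_i(-1)^i c_i\binom{a}{q-i}$, where $c_i$ is the product of binomials in the definition of $H$, $F$ or $G$. For $H$, I intend to recognise the coefficients via $\sum_i\binom{d_1+a-2-i}{a-1}(-t)^i\cdots$ as a truncation of $(1+t)^{-a}$. Concretely, $\sum_{i\ge0}(-1)^i\binom{a-1+i}{a-1}t^i=(1+t)^{-a}$, reversed and truncated, so the product with $(1+t)^a$ is $1$ plus terms of high degree. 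This gives a Padé-type identity. For $F$ and $G$, I would try the same generating-function approach. If that fails, I will use the Chu--Vandermonde identity, or a telescoping or Zeilberger certificate in $i$, to show that each relevant coefficient sum vanishes identically as a polynomial identity in the parameters. A cleaner alternative is to check membership via the derivative characterization: $f\in J$ iff $f(x,-x+\ldots)$ satisfies certain vanishing conditions. I would fall back on the explicit sum identities.

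\emph{Step 3: no common factor.} Since $x^{d_1}\in J$ in case (1), I only need $x\nmid H$. This holds because the $i=d_1-1$ coefficient is $\pm\binom{a-1}{a-1}\ne0$, so $H$ contains the monomial $y^{k-a+d_1}$ with nonzero coefficient. In case (2), a common factor of degree $\ge1$ would lie in $J$ in degree less than $\alpha$, which is impossible because $\alpha$ is the initial degree of $J$ by Step 1. Alternatively, I can show directly that the two forms are linearly independent when their degrees are equal. In the unequal case, the lower-degree form is the unique generator in degree $\alpha$, and the other form is not a multiple of it because of the factor $y^2$ and a comparison of $x$-leading coefficients. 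The special definition of $F_{2,d,1,0}$ is handled separately as a degenerate instance.
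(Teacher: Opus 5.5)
Your framework is sound. $J$ is Gorenstein, hence a complete intersection with forced generator degrees, and two elements of $J$ of those degrees forming a regular sequence generate $J$; this does the job of the paper's Observations and generation lemma. Your proof of membership in case (1), via the truncation of $(1+t)^{-a}$, is also correct and shorter than the paper's induction on $a$.

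\textbf{The gap is Step 2 for the families $F$ and $G$, which is the heart of the theorem.} You list possible tools but identify no identity, so the memberships $F_{i,d_2,a-k,k},\,G_{i,d_2,a-k,k}\in(x^{d_1},y^{d_2}):(x+y)^a$ are never established. The $H$ trick does not transfer verbatim: these coefficients are products of two binomials with parameters shifted by $k$, and $G_2$ carries an extra factor $(i-1)$.

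The paper supplies the missing idea in two stages. First, for $d_1=d_2$ it inducts on $a$ using explicit relations such as $(d-1)(x+y)G_1=(d-b-1)yF_1+(d+b)F_2$, which place $(x+y)$ times the new pair in the previous colon ideal. Second, it shows that $\partial^k/\partial x^k$ maps $(x^{d_2},y^{d_2}):(x+y)^{a-k}$ into $(x^{d_1},y^{d_2}):(x+y)^a$, and that the $x$-derivatives of the $n=0$ forms are scalar multiples of the $n=k$ forms.

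Your generating-function idea can in fact be completed, but it needs a concrete reformulation. Set $b=\frac{a-k-1}{2}$ and $N=d_2-b-1-k$. Then $F_{1,d_2,a-k,k}$ is the coefficient of $s^N$ in $(1-sx)^{-(b+k+1)}(1+sy)^{-(b+1)}\in P[[s]]$. Expand $s^a(x+y)^a=\sum_j\binom{a}{j}(-1)^{a-j}(1+sy)^j(1-sx)^{a-j}$. Each summand of $s^a(x+y)^a(1-sx)^{-(b+k+1)}(1+sy)^{-(b+1)}$ is then one of two kinds:
either a polynomial of degree at most $b+k$ in $sy$ times a power series in $sx$, or a polynomial of degree at most $b$ in $sx$ times a power series in $sy$.
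So the coefficient of $s^{N+a}$, a form of degree $d_2+b$, lies in $(x^{d_1},y^{d_2})$. $F_2$ and $G_1$ go the same way. For $G_2$, put $c=\frac{a-k-2}{2}$ and first use $i\binom{c+i}{c}=(c+1)\binom{c+i}{c+1}$ to write $G_2=-(c+1)G_1-E$. Here $E$ is the coefficient of $s^{d_2-c-1-k}$ in $(1-sx)^{-(c+k+1)}(1+sy)^{-(c+1)}$ and is handled similarly.

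Separately, your first argument in Step 3 is invalid, because a factor shared by two elements of $J$ need not lie in $J$. Nonzero $f\in J_\alpha$ and $g\in J_\beta$ fail to be a regular sequence exactly when $f\mid g$. In that case the common factor is $f$ itself, of degree $\alpha$, not less. So what must be checked is non-divisibility, or linear independence when $\alpha=\beta$. That is your ``alternative'' argument, and it is exactly the hypothesis $g\nmid h$ in the paper's generation lemma. You also need the forms to be nonzero, e.g.\ via their $i=0$ coefficients.

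Step 1 is fine in principle but not carried out. The degrees follow more cheaply, as in the paper, from $\alpha+\beta=d_1+d_2-a$ together with the first degree in which $\times(x+y)^a$ on $C$ fails to be injective.
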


Note that $(x+y)^{d_1+d_2-1} \in (x^{d_1}, y^{d_2})$, and therefore the colon $(x^{d_1}, y^{d_2}):(x+y)^a$ is the whole of $P$ for $a\ge d_1+d_2-1$. (This is why we ask for $a\leq d_{1}+d_{2}-2$ in the theorem.)

Our approach is outlined as follows: we will collect some preliminary observations concerning the degrees of the colon ideal's generators. After a key lemma (see \autoref{in colon ideal with right degrees implies generation}), we prove a version of the main theorem in which $d_{1}=d_{2}$. Finally, we develop a connection between the proposed generators (in the $d_{1}=d_{2}$ case) and their partial derivatives to pave the way for proof of the main theorem.

First, we explain why the colon ideal we study has two generators and derive a relation which the generator degrees satisfy. 

\begin{obs}\label{colon generators relation}
Let $P=\F[x,y]$ where $\F$ is a field of characteristic zero. Consider $(x^{d_{1}},y^{d_{2}}):(x+y)^{a}$ where $2\leq d_{1},d_{2}$ and $1\leq a\leq d_{1}+d_{2}-2$. This ideal has two generators $Q_{1}$ and $Q_{2}$, and these generators satisfy the relation
\begin{equation*}
    \deg(Q_{1})+\deg(Q_{2})=d_{1}+d_{2}-a.
\end{equation*}
\end{obs}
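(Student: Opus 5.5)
Write $J=(x^{d_1},y^{d_2})$ and $\ell=x+y$, so the ideal in question is $J:\ell^{a}$. The plan is to use the isomorphism $P/(J:\ell^{a})\cong \ell^{a}(P/J)$ up to a degree shift of $a$. From it we read off that $J:\ell^{a}$ is a height two perfect ideal, so Hilbert--Burch shows it has exactly two generators, and then we compute the degrees from the Hilbert series.

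\textbf{Two generators.} Since $J$ is $(x,y)$-primary, so is $J:\ell^{a}$. It is a proper ideal because $\ell^{a}\notin J$ for $a\le d_1+d_2-2$: indeed $\ell^{d_1+d_2-2}$ contains the monomial $x^{d_1-1}y^{d_2-1}$ with nonzero coefficient in characteristic zero. An $(x,y)$-primary ideal in $\F[x,y]$ has depth zero quotient, so by Auslander--Buchsbaum its quotient has projective dimension $2$. Hilbert--Burch then gives a minimal resolution
\[0\to P^{m-1}\to P^{m}\to P\to P/(J:\ell^{a})\to 0.\]
To show $m=2$, we show that $P/(J:\ell^{a})$ is Gorenstein, i.e.\ has one-dimensional socle. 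The module $\ell^{a}(P/J)$ is a cyclic submodule of the Gorenstein ring $P/J$, whose socle is spanned by $x^{d_1-1}y^{d_2-1}$. Every nonzero submodule of $P/J$ contains that socle, so $\ell^{a}(P/J)$ has one-dimensional socle. Hence $P/(J:\ell^{a})$ is Artinian Gorenstein of codimension $2$, and therefore a complete intersection: $J:\ell^{a}=(Q_1,Q_2)$.

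\textbf{Degree relation.} For a complete intersection $(Q_1,Q_2)$ the socle degree of the quotient is $\deg Q_1+\deg Q_2-2$. We compute the socle degree another way. The socle of $P/(J:\ell^{a})$ is the element $f$ with $\ell^{a}f\equiv c\,x^{d_1-1}y^{d_2-1}$, $c\neq0$, in $P/J$. This forces $\deg f+a=d_1+d_2-2$, so the socle degree is $d_1+d_2-2-a$. Equating the two expressions gives $\deg Q_1+\deg Q_2-2=d_1+d_2-2-a$, which is the claim.

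\textbf{Main obstacle.} The step needing the most care is the socle argument: the Gorenstein property of $P/J$ must transfer to the colon quotient. The standard fact used is that for an Artinian Gorenstein ring $R$ and $r\in R$, the ring $R/(0:r)\cong rR$ is Gorenstein, because $0:(0:r)=(r)$ and Matlis duality make $rR$ self-dual. If this seems too slick, an alternative is to check the socle directly. Suppose $g\notin J:\ell^{a}$ satisfies $xg,yg\in J:\ell^{a}$. Then $\ell^{a}g$ is a nonzero socle element of $P/J$, so it is a multiple of $x^{d_1-1}y^{d_2-1}$. Two such $g$ of the same degree therefore differ, modulo $J:\ell^{a}$, by a scalar, and the socle is one-dimensional.
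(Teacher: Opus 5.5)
Your argument is correct, but it takes a different route from the paper's. The paper never examines $P/(J:\ell^{a})$ itself. It applies Hilbert--Burch to the three-generated ideal $(x^{d_1},y^{d_2},(x+y)^{a})$, so the syzygy module is free of rank two with basis in degrees $e_1,e_2$. Finite length forces the Hilbert series numerator to vanish to order two at $\tau=1$, and $N'(1)=0$ gives $e_1+e_2=d_1+d_2+a$. Finally, the third coordinates of the two basis syzygies generate the colon ideal and have degrees $e_i-a$.

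You instead transport the one-dimensional socle of $P/J$ to $P/(J:\ell^{a})$ through the shifted isomorphism with $\ell^{a}(P/J)$. You conclude that the colon ideal is codimension-two Gorenstein, hence a complete intersection. You then equate the complete-intersection socle degree $\deg Q_1+\deg Q_2-2$ with the inherited socle degree $d_1+d_2-2-a$; this is essentially the standard linkage argument.

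The paper's proof needs only Hilbert--Burch and a one-line Hilbert series computation. Its assertion that the colon ideal has exactly two nonzero, minimal generators does, however, tacitly rely on the colon ideal being proper and $(x,y)$-primary, which you verify explicitly. Your proof uses heavier input: Auslander--Buchsbaum, and the fact that Artinian Gorenstein quotients of $\F[x,y]$ are complete intersections. In exchange it yields more structure: $Q_1,Q_2$ is a regular sequence, and the socle of $P/(J:\ell^{a})$ sits in degree $d_1+d_2-2-a$. Its socle step also does not depend on having two variables: for any Artinian Gorenstein $J$ and form $f\notin J$, $P/(J:f)$ is Gorenstein with socle degree lowered by $\deg f$. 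Only the final passage to a complete intersection is special to codimension two.
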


\begin{proof}
This is just unpacking the Hilbert-Burch Theorem (see \cite{Burch} or Theorem 20.15 from \cite{Eisenbud-95}) in our specific context. Let $I$ be the ideal $\big(x^{d_{1}},y^{d_{2}},(x+y)^{a}\big)$ where $2\leq d_{1},d_{2}$ and $1\leq a\leq d_{1}+d_{2}-2$. We have a free resolution of $P/I$:
\begin{equation*}
    0\ \longrightarrow\ P(-e_{1})\oplus P(-e_{2})\ \longrightarrow\ P(-d_{1})\oplus P(-d_{2})\oplus P(-a)\ \longrightarrow\ P\ \longrightarrow\ P/I\ \longrightarrow\ 0.
\end{equation*}
Here, $e_{1}$ and $e_{2}$ are the degrees of a homogeneous basis for
the first syzygy module. We can read the Hilbert series of $P/I$ from the resolution:
\begin{equation*}
    \text{HS}_{P/I}(\tau)=\frac{N(\tau)}{(1-\tau)^{2}}\quad \text{where}\quad N(\tau)=1-(\tau^{d_{1}}+\tau^{d_{2}}+\tau^{a})+(\tau^{e_{1}}+\tau^{e_{2}}).
\end{equation*}
However, $P/I$ has finite length (since $I$ contains a monomial complete intersection), so $\text{HS}_{P/I}(\tau)$ is a polynomial. This means $N(\tau)$ must have a zero of order at least $2$ at $\tau=1$, which is to say $N(1)=0$ and $N'(1)=0$. The latter fact implies
\begin{equation}\label{shift relation}
    -d_{1}-d_{2}-a+e_{1}+e_{2}=0\quad\implies\quad e_{1}+e_{2}=d_{1}+d_{2}+a.
\end{equation}
\indent Now a homogeneous relation of degree $e_{i}$ has the form $A_{i}x^{d_{1}}+B_{i}y^{d_{2}}+Q_{i}(x+y)^{a}=0$ for $i=1,2$; in particular, we have $\deg(Q_{i})=e_{i}-a$. Moreover, the third components $Q_{1}$ and $Q_{2}$ of these syzygies generate $(x^{d_1},y^{d_2}):(x+y)^a$, as an arbitrary polynomial $f$ belongs to $(x^{d_1},y^{d_2}):(x+y)^a$ precisely when there exist $A,B\in P$ such that $Ax^{d_1}+By^{d_2}+f(x+y)^a=0$, i.e. precisely when $f$ occurs as the third component of a syzygy on $x^{d_1},y^{d_2},(x+y)^a$. Thus, after summing $\deg(Q_{1})$ and $\deg(Q_{2})$, an appeal to \eqref{shift relation} brings us to the asserted equality:
\begin{equation*}
    \deg(Q_{1})+\deg(Q_{2})=e_{1}+e_{2}-2a\quad\implies\quad \deg(Q_{1})+\deg(Q_{2})=d_{1}+d_{2}-a.
\end{equation*}
\end{proof}

For the next lemma, recall that the kernel of the endomorphism $\times(x+y)^{a}$ on the quotient $P/(x^{d_{1}},y^{d_{2}})$ is the image of the colon ideal $(x^{d_{1}},y^{d_{2}}):(x+y)^{a}$. Thus, finding the first degree for which injectivity of $\times (x+y)^{a}$ fails is equivalent to knowing the first degree in which a generator of the colon ideal lying outside $(x^{d_1},y^{d_2})$ will be found.

\begin{lem}\label{injectivity failure}
Form the quotient $R=P/(x^{d_{1}},y^{d_{2}})$ where $2\leq d_{1}\leq d_{2}$, and set $k=d_{2}-d_{1}$. Let $\ell$ be a general linear form, and for any fixed integer $a$ satisfying $1\leq a\leq d_{1}+d_{2}-2$ let $\times\ell^{a}:R_{\tau}\rightarrow R_{\tau+a}$ be the map induced by ``multiplication by $\ell^{a}$.'' The first degree $\tau$ where this map fails to be injective is given by
\begin{equation*}
    \tau=\begin{cases}
        d_{2}-a & \text{when}\ a\leq k, \\
        d_{2}-\left\lceil\frac{a+k}{2}\right\rceil & \text{when}\ a\geq k+1.
    \end{cases}
\end{equation*}
\end{lem}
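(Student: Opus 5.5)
The plan is to reduce everything to a comparison of values of the Hilbert function of $R$. Since $\operatorname{char}(\F)=0$, the monomial complete intersection $R=P/(x^{d_1},y^{d_2})$ has the strong Lefschetz property (\cite{Stanley-80}, \cite{Watanabe}). Hence for a general linear form $\ell$ the map $\times\ell^{a}:R_\tau\to R_{\tau+a}$ has maximal rank in every degree. So $\times\ell^a$ fails to be injective in degree $\tau$ exactly when $h(\tau)>h(\tau+a)$, where $h=h_R$. Recall that
\begin{equation*}
h(\tau)=\begin{cases}\tau+1 & 0\le \tau\le d_1-1,\\ d_1 & d_1-1\le \tau\le d_2-1,\\ d_1+d_2-1-\tau & d_2-1\le\tau\le d_1+d_2-1,\end{cases}
\end{equation*}
and $h(\tau)=0$ otherwise. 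The lemma then amounts to finding the least $\tau\ge 0$ with $h(\tau)>h(\tau+a)$.

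The key monotonicity remark is the following. On the range $\tau+a\ge d_2-1$ (equivalently, wherever $h(\tau+a)$ is on the plateau or the decreasing part), the function $h(\tau)$ is nondecreasing in $\tau$ for $\tau\le d_2-1$ and $h(\tau+a)$ is nonincreasing. Hence $\delta(\tau):=h(\tau)-h(\tau+a)$ is nondecreasing in $\tau$ up to $\tau=d_2-1$. In addition, $\delta(\tau)\le 0$ whenever $\tau+a\le d_2-1$, because then $h(\tau+a)=\min(\tau+a+1,d_1)\ge h(\tau)$. So it suffices to exhibit a $\tau_0$ with $\delta(\tau_0)>0$ and $\delta(\tau_0-1)\le 0$.

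\emph{Case $a\le k$.} Take $\tau_0=d_2-a$. Then $\tau_0\ge d_1$, so $h(\tau_0)=d_1$, while $h(\tau_0+a)=h(d_2)=d_1-1$; hence $\delta(\tau_0)=1$. For $\tau<\tau_0$ we have $\tau+a\le d_2-1$, so $\delta(\tau)\le 0$ by the remark above.

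\emph{Case $a\ge k+1$.} Take $\tau_0=d_2-\lceil\frac{a+k}{2}\rceil=d_1-\lceil\frac{a-k}{2}\rceil$.
\begin{itemize}
\item The hypothesis $a\le d_1+d_2-2$ gives $\lceil\frac{a+k}{2}\rceil\le d_2-1$, so $\tau_0\ge 1$. Also $a\ge k+1$ gives $\tau_0\le d_1-1$, so $h(\tau_0)=d_1-\lceil\frac{a-k}{2}\rceil+1$.
\item Using $a-\lceil\frac{a+k}{2}\rceil=\lfloor\frac{a-k}{2}\rfloor$, we get $\tau_0+a=d_2+\lfloor\frac{a-k}{2}\rfloor\ge d_2$. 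Hence $h(\tau_0+a)=d_1-1-\lfloor\frac{a-k}{2}\rfloor$.
\item Combining, $\delta(\tau_0)=2-\lceil\frac{a-k}{2}\rceil+\lfloor\frac{a-k}{2}\rfloor\ge 1$.
\item Passing to $\tau_0-1\ge 0$, the value $h$ drops by exactly $1$. Meanwhile $h(\tau_0-1+a)$ rises by exactly $1$, since $\tau_0-1+a\ge d_2-1$ still lies on the linear piece $d_1+d_2-1-\tau$ (this formula also gives $d_1$ at $d_2-1$). Thus $\delta(\tau_0-1)=\delta(\tau_0)-2\le 0$.
\end{itemize}
For $\tau\le\tau_0-1$ the monotonicity remark then gives $\delta(\tau)\le 0$, so $\tau_0$ is the first failure degree.

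The main point requiring care is the bookkeeping at the boundary of the pieces of $h$: the plateau endpoint $d_2-1$, the parity of $a-k$ in the floors and ceilings, and checking $\tau_0\ge 1$ from the bound on $a$. The substantive input is just the SLP of monomial complete intersections, which converts injectivity into the numerical inequality.
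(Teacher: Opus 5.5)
Your proposal is correct and follows essentially the same route as the paper: the strong Lefschetz property of the monomial complete intersection $R$ turns non-injectivity of $\times\ell^{a}$ into the inequality $h(\tau)>h(\tau+a)$, and you then locate the least such $\tau$. The only difference is cosmetic: in the case $a\ge k+1$ you compute the piecewise Hilbert function explicitly and use a monotonicity argument for $\delta(\tau)=h(\tau)-h(\tau+a)$, where the paper instead invokes symmetry of the Hilbert function to reduce to finding the least $\tau$ with $\tau+(\tau+a)$ exceeding the socle degree; your version simply makes that step's bookkeeping explicit.
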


\begin{proof}
Note that $2d_{2}-k-2$ is the socle degree of $R$. As $R$ is a complete intersection, Theorem 1 of \cite{Rei-Rob-Roi} tells us the Hilbert function $\mathrm{HF}_{R}(\tau)$ is symmetric about half the socle degree and moreover that $\mathrm{HF}_{R}(\tau)$ strictly increases from $\tau=0$ to $\tau=d_{2}-k-1$, remains constant from $\tau=d_{2}-k-1$ to $\tau=d_{2}-1$, and strictly decreases from $\tau=d_{2}$ to $\tau=2d_{2}-k-2$. We pair these opening remarks with the classic result of \cite{Stanley-80} (see also Proposition 4.4 in \cite{Har-Mig-Nag-Wat}), which is that $R$ enjoys SLP. The upshot is that $\times\ell^{a}:R_{\tau}\rightarrow R_{\tau+a}$ fails to be injective precisely when 
\begin{equation}\label{HF desire}
    \text{HF}_{R}(\tau)>\text{HF}_{R}(\tau+a).
\end{equation}
Our aim is to find the least nonnegative $\tau$ for which \eqref{HF desire} is satisfied. Notice that $\tau\leq d_{2}-a-1$ will never meet this condition. Indeed, $\tau\leq d_{2}-a-1$ is equivalent to $\tau+a\leq d_{2}-1$, and the Hilbert function is nondecreasing up to the latter degree. Thus, $\tau\geq d_{2}-a$ is necessary for \eqref{HF desire} to hold.

Suppose here that $a\leq k$. This setup implies $d_{2}-k-1<d_{2}-a\leq d_{2}-1$, and using the description of $\text{HF}_{R}(\tau)$ from the first paragraph we obtain
\begin{equation*}
    \text{HF}_{R}(d_{2}-k-1)=\text{HF}_{R}(d_{2}-a)=\text{HF}_{R}(d_{2}-1)>\text{HF}_{R}(d_{2}).
\end{equation*}
This shows that the lower bound established in the previous paragraph actually satisfies \eqref{HF desire}, so $\tau= d_{2}-a$ is the first degree where injectivity fails when $a\leq k$.

Now suppose that $a\geq k+1$. This setup implies $d_{2}-a-1\leq d_{2}-k-1$, meaning that (by exploiting the symmetry of the Hilbert function) the first degree where injectivity fails can be obtained by finding the least $\tau$ such that $\tau+(\tau+a)$ exceeds the socle degree:
\begin{equation*}
    2\tau+a>2d_{2}-k-2\quad\iff\quad \tau>d_{2}-\left\lceil\frac{a+k}{2}\right\rceil-1.
\end{equation*}
This immediately yields $\tau=d_{2}-\left\lceil\frac{a+k}{2}\right\rceil$ as the first degree for which injectivity fails when $a\geq k+1$. (Observe that this value of $\tau$ is positive for $a\leq d_{1}+d_{2}-2$.)
\end{proof}

Pairing the preceding results gives us the degrees of the generators for our colon ideal.

\begin{obs}\label{colon generator degrees}
Let $Q_{1}$ and $Q_{2}$ generate $(x^{d_{1}},y^{d_{2}}):(x+y)^{a}$ where $2\leq d_{1}\leq d_{2}$ and $1\leq a\leq d_{1}+d_{2}-2$. Set $k=d_{2}-d_{1}$.
\begin{enumerate}
    \item If $1\leq a\leq k$, then $\deg(Q_{1})=d_{1}$ and $\deg(Q_{2})=d_{2}-a$.
    \item If $k+1\leq a\leq d_{1}+d_{2}-2$, then $\deg(Q_{1})=d_{2}-\left\lceil\frac{a+k}{2}\right\rceil$ and $\deg(Q_{2})=d_{2}-\left\lfloor\frac{a+k}{2}\right\rfloor$.
\end{enumerate}
\end{obs}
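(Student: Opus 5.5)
Since the colon ideal $J=(x^{d_1},y^{d_2}):(x+y)^a$ has exactly two minimal generators $Q_1,Q_2$ by \autoref{colon generators relation}, with $\deg Q_1+\deg Q_2=d_1+d_2-a$, it suffices to pin down the smaller degree $\deg Q_1$; the larger one is then forced by this relation. Order the generators so that $\deg Q_1\le \deg Q_2$. The key observation is that $J\supseteq (x^{d_1},y^{d_2})$. Moreover, the image of $J$ in $R=P/(x^{d_1},y^{d_2})$ is exactly the kernel of $\times(x+y)^a$. Since $x+y$ may be taken as a general linear form (a change of coordinates $x\mapsto \alpha x$, $y\mapsto\beta y$ preserves the monomial ideal), \autoref{injectivity failure} tells us the first degree $\tau_0$ in which $J$ contains an element outside $(x^{d_1},y^{d_2})$.

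\emph{Case $a\le k$.} Here $\tau_0=d_2-a$. First I will check that $d_1$ is the smallest degree of an element of $J$. Indeed, $x^{d_1}\in J$, and any element of $J$ of degree $<d_1$ lies outside $(x^{d_1},y^{d_2})$ (all monomials of that degree avoid the ideal, because $d_1\le d_2$), so it must have degree $\ge \tau_0=d_2-a\ge d_1$. Hence $\min\deg J=d_1$, and we may take $Q_1=x^{d_1}$ up to scalar. The other degree is then $d_1+d_2-a-d_1=d_2-a$. Note that $d_2-a\ge d_1$ gives the claimed ordering, and this is consistent with the new element appearing in degree $\tau_0$.

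\emph{Case $a\ge k+1$.} Here $\tau_0=d_2-\lceil\frac{a+k}{2}\rceil$. Since $a\ge k+1$, we have $\frac{a+k}{2}>k$, so $\tau_0<d_2-k=d_1$. Thus there is an element of $J$ in degree $\tau_0<d_1$. Every element of degree $<\tau_0$ is a non-injectivity witness, and no such elements exist below $\tau_0$. Elements of degree $<d_1$ cannot lie in $(x^{d_1},y^{d_2})$, so $\min\deg J=\tau_0$ and $\deg Q_1=\tau_0$. Then
\[\deg Q_2=d_1+d_2-a-d_2+\left\lceil\tfrac{a+k}{2}\right\rceil=d_1-a+\left\lceil\tfrac{a+k}{2}\right\rceil=d_2-\left\lfloor\tfrac{a+k}{2}\right\rfloor,\]
using $d_1=d_2-k$ and $\lceil m/2\rceil-m=-\lfloor m/2\rfloor$ with $m=a+k$.

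The only subtle point is justifying that minimal generator degrees are determined this way. The smallest generator degree equals the smallest degree of a nonzero element of the homogeneous ideal $J$, and the two-generator relation fixes the other. A second subtlety is that $x+y$ behaves like a general $\ell$; I will handle it by the diagonal scaling remark above, which reduces any $\ell=\alpha x+\beta y$ with $\alpha\beta\ne0$ to $x+y$.
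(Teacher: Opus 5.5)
Your proposal is correct and follows essentially the paper's argument: you combine the degree-sum relation $\deg Q_1+\deg Q_2=d_1+d_2-a$ from the Hilbert--Burch observation with the first non-injective degree from \autoref{injectivity failure}, then solve for the remaining degree. The differences are cosmetic. In case (1) you locate the lower degree $d_1$ (via $x^{d_1}$ and the absence of kernel elements below $d_2-a\ge d_1$) rather than the generator in degree $d_2-a$. You also make explicit two points the paper leaves implicit: that the least degree of $J$ is the least minimal-generator degree, and that $x+y$ behaves like a general linear form by diagonal rescaling.
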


\begin{proof}
Let $Q_{1}$ and $Q_{2}$ generate $(x^{d_{1}},y^{d_{2}}):(x+y)^{a}$. Since $k=d_{2}-d_{1}$, the relation from \autoref{colon generators relation} reads in this setting as
\begin{equation}\label{degrees of gens condition}
    \deg(Q_{1})+\deg(Q_{2})=2d_{2}-(a+k).
\end{equation}
Suppose $1\leq a\leq k$ holds. By \autoref{injectivity failure}, at least one of the generators of $(x^{d_{1}},y^{d_{2}}):(x+y)^{a}$, say $Q_{2}$, has
\begin{equation*}
    \deg(Q_{2})=d_{2}-a.
\end{equation*}
In view of \eqref{degrees of gens condition}, we then obtain
\begin{equation*}
    \deg(Q_{1})+(d_{2}-a)=2d_{2}-(a+k)\quad\implies\quad \deg(Q_{1})=d_{2}-k,\ \ \text{i.e.}\ \ \deg(Q_{1})=d_{1}.
\end{equation*}
Now suppose $k+1\leq a\leq d_{1}+d_{2}-2$ holds. \autoref{injectivity failure} says one of the generators of $(x^{d_{1}},y^{d_{2}}):(x+y)^{a}$, say $Q_{1}$, satisfies
\begin{equation*}
    \deg(Q_{1})=d_{2}-\left\lceil\frac{a+k}{2}\right\rceil.
\end{equation*}
In view of \eqref{degrees of gens condition}, we then obtain
\begin{equation*}
    \left(d_{2}-\left\lceil\frac{a+k}{2}\right\rceil\right)+\deg(Q_{2})=2d_{2}-(a+k)\quad\implies\quad \deg(Q_{2})=d_{2}-\left\lfloor\frac{a+k}{2}\right\rfloor.
\end{equation*}
\end{proof}

The bulk of the proof of \autoref{colon generators when equigenerated} is dedicated to showing that our proposed generators belong to the colon ideal at all, and the following lemma tells us that their membership in the colon ideal is enough.

\begin{lem}\label{in colon ideal with right degrees implies generation}
Retain the setup of \autoref{colon generator degrees}. If $g,h\in(x^{d_{1}},y^{d_{2}}):(x+y)^{a}$ are homogeneous forms satisfying $\deg(g)=\deg(Q_{1})$ and $\deg(h)=\deg(Q_{2})$ with $g\nmid h$, then in fact $(x^{d_{1}},y^{d_{2}}):(x+y)^{a}=(g,h)$.
\end{lem}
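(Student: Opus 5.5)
The plan is a direct degree comparison inside the two-generated ideal $J=(x^{d_{1}},y^{d_{2}}):(x+y)^{a}$. By \autoref{colon generators relation} and \autoref{colon generator degrees}, $J$ is minimally generated by the homogeneous forms $Q_{1},Q_{2}$, and in both cases $\deg(Q_{1})\leq\deg(Q_{2})$. In case (1), $\deg(Q_{2})=d_{2}-a\geq d_{2}-k=d_{1}=\deg(Q_{1})$. In case (2), the inequality is just $\lceil\cdot\rceil\geq\lfloor\cdot\rfloor$. Since $g$ and $h$ have degrees, they are nonzero forms. I will write $e_{i}=\deg(Q_{i})$ and split according to whether $e_{1}<e_{2}$ or $e_{1}=e_{2}$. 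In each case the goal is to show $Q_{1},Q_{2}\in(g,h)$; the reverse inclusion is the hypothesis $g,h\in J$.

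\emph{Case $e_{1}<e_{2}$.} Every element of $J_{e_{1}}$ has the form $fQ_{1}+f'Q_{2}$ with $f,f'$ homogeneous. Since $e_{2}>e_{1}$, we must have $f'=0$ and $f$ a scalar. Hence $J_{e_{1}}=\F\cdot Q_{1}$, so $g=cQ_{1}$ with $c\in\F^{\times}$, and $Q_{1}\in(g,h)$. Next write $h=fQ_{1}+c'Q_{2}$, where $f$ is a form of degree $e_{2}-e_{1}$ and $c'\in\F$; this uses that $J_{e_{2}}=P_{e_{2}-e_{1}}Q_{1}+\F Q_{2}$. If $c'=0$, then $h=c^{-1}fg$, contradicting $g\nmid h$. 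So $c'\neq 0$, and $Q_{2}=c'^{-1}(h-c^{-1}fg)\in(g,h)$.

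\emph{Case $e_{1}=e_{2}=e$.} Now $J_{e}=\F Q_{1}+\F Q_{2}$. This span is $2$-dimensional, because $Q_{1}$ and $Q_{2}$ are minimal generators of the same degree and so cannot be proportional. The forms $g,h$ lie in $J_{e}$ and have equal degree. The condition $g\nmid h$ says exactly that $h$ is not a scalar multiple of $g$, so $g,h$ are linearly independent. Therefore they span $J_{e}$, which gives $Q_{1},Q_{2}\in(g,h)$.

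In both cases $J=(Q_{1},Q_{2})\subseteq(g,h)\subseteq J$, as required. There is no genuine obstacle here. The only point needing care is the justification that $J_{e_{1}}$, respectively $J_{e}$, has the stated description. This follows because every element of $J$ is a $P$-combination of $Q_{1},Q_{2}$, and homogeneous components can be compared degree by degree.
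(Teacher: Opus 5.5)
Your proof is correct and follows the paper's argument essentially step for step: you split on whether $\deg(Q_{1})<\deg(Q_{2})$ or the two degrees agree. In the first case, degree considerations force $g$ to be a unit multiple of $Q_{1}$ and the $Q_{2}$-coefficient of $h$ to be a nonzero scalar; in the second, $g,h$ are linearly independent in $\F Q_{1}+\F Q_{2}$, which the paper phrases as invertibility of a $2\times 2$ scalar matrix. Your explicit checks that $\deg(Q_{1})\leq\deg(Q_{2})$ in both cases of \autoref{colon generator degrees} and that $g,h\neq 0$ fill in points the paper leaves implicit.
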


\begin{proof}
We go by cases according to whether $\deg(Q_{1})<\deg(Q_{2})$ or $\deg(Q_{1})=\deg(Q_{2})$. Suppose the former situation holds. Since $g\in (Q_{1},Q_{2})$, there exist elements $\alpha_{1},\alpha_{2}\in P$ such that $g=\alpha_{1}Q_{1}+\alpha_{2}Q_{2}$. However, $\deg(g)=\deg(Q_{1})<\deg(Q_{2})$ then implies $\alpha_{2}=0$ and $\alpha_{1}\in\F$, i.e. $g$ is a scalar multiple of $Q_{1}$. Hence, we may replace $Q_{1}$ by $g$ and generate the same ideal. Next, $h\in (g,Q_{2})$ means there exist $\beta_{1},\beta_{2}\in P$ such that $h=\beta_{1}g+\beta_{2}Q_{2}$. We have $\deg(h)=\deg(Q_{2})$, implying that $\beta_{2}\in\F$. In fact, $\beta_{2}\in\F^{\times}$ because otherwise we obtain $h=\beta_{1}g$, a contradiction since $g$ does not divide $h$. All this leads to the following realization:
\begin{equation*}
    h=\beta_{1}g+\beta_{2}Q_{2}\quad\implies\quad Q_{2}=(-\beta_{2})^{-1}\cdot (\beta_{1}g-h)\quad\implies\quad Q_{2}\in (g,h).
\end{equation*}
This completes the proof when $\deg(Q_{1})<\deg(Q_{2})$, so assume now that their degrees are equal. Observe that $g,h\in (Q_{1},Q_{2})$ means there exist elements $\gamma_{1},\gamma_{2},\delta_{1},\delta_{2}\in P$ such that
\begin{equation*}
    g=\gamma_{1}Q_{1}+\gamma_{2}Q_{2}\quad\text{and}\quad h=\delta_{1}Q_{1}+\delta_{2}Q_{2}\qquad\implies\qquad \begin{bmatrix}
        \gamma_{1} & \gamma_{2} \\
        \delta_{1} & \delta_{2}
    \end{bmatrix}\begin{bmatrix}
        Q_{1} \\ Q_{2}
    \end{bmatrix}=\begin{bmatrix}
        g \\ h
    \end{bmatrix}.
\end{equation*}
In this case, notice that
\begin{equation*}
    \deg(g)=\deg(Q_{1})=\deg(Q_{2})=\deg(h)\quad\implies\quad \gamma_{1},\gamma_{2},\delta_{1},\delta_{2}\in\F.
\end{equation*}
We claim that the matrix containing these field entries is invertible, for suppose instead that it is not. Then its rows are $P$-linearly dependent, so there exists $c\in P$ (actually $c\in\F$) such that $\delta_{1}=c\gamma_{1}$ and $\delta_{2}=c\gamma_{2}$. But this yields
\begin{equation*}
    h=c\gamma_{1}Q_{1}+c\gamma_{2}Q_{2}\quad\implies\quad h=c(\gamma_{1}Q_{1}+\gamma_{2}Q_{2})\quad\implies\quad h=c\cdot g.
\end{equation*}
In other words, we have $h$ is a scalar multiple of $g$, contrary to the assumption $g\nmid h$. Therefore, the rows are linearly independent and so the matrix is invertible. We thence obtain
\begin{equation*}
    \left(\begin{bmatrix}
        \gamma_{1} & \gamma_{2} \\
        \delta_{1} & \delta_{2}
    \end{bmatrix}\right)^{-1}\begin{bmatrix}
        g \\ h
    \end{bmatrix}=\begin{bmatrix}
        Q_{1} \\ Q_{2}
    \end{bmatrix}\quad\implies\quad Q_{1},Q_{2}\in(g,h).
\end{equation*}
This establishes the lemma when $\deg(Q_{1})=\deg(Q_{2})$.
\end{proof}

Having dealt with the preparatory results, we are ready to provide the generators of $(x^{d_{1}},y^{d_{2}}):(x+y)^{a}$ when $d_{1}=d_{2}$; they arise by taking $n=0$ in \autoref{formulas}.

\begin{thrm}\label{colon generators when equigenerated}
The colon ideal $(x^{d},y^{d}):(x+y)^{a}$ where $d\geq 2$ and $1\leq a\leq 2d-2$ has the following generators in $P$:
\begin{enumerate}[label=(\arabic*)]
    \item When $a$ is odd, the generators are $F_{1,d,a,0}$ and $F_{2,d,a,0}$.
    \item When $a$ is even, the generators are $G_{1,d, a,0}$ and $G_{2,d,a,0}$.
\end{enumerate}
\end{thrm}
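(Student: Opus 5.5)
Here $k=0$, so by \autoref{colon generator degrees} the generators have degrees $d-\lceil a/2\rceil$ and $d-\lfloor a/2\rfloor$. For $a$ odd, $F_{1,d,a,0}$ has degree $d-\frac{a+1}{2}$ and $F_{2,d,a,0}$ has degree $d-\frac{a-1}{2}$. For $a$ even, both $G$'s have degree $d-\frac a2$. These match. So by \autoref{in colon ideal with right degrees implies generation} it will suffice to check two things. First, each proposed form lies in $(x^d,y^d):(x+y)^a$. Second, the lower-degree form does not divide the other one.

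Non-divisibility is the easy part. When $a$ is odd, $F_{1,d,a,0}$ has nonzero $x^{d-\frac{a+1}{2}}$ coefficient $\binom{d-1}{\frac{a-1}{2}}$. Suppose $F_1$ divided $F_2=y^2(\cdots)$ with a linear cofactor. Comparing the monomials that are not divisible by $y^2$ should force a contradiction; I would compare the $x$-adic expansion of the product term by term. When $a$ is even, divisibility means proportionality. The $x^{d-a/2}$ coefficient of $G_1$ is $0$, because of the factor $y$. The corresponding coefficient of $G_2$ is $-\binom{d-1}{\frac{a-2}{2}}\neq 0$, because of the factor $(i-1)$ at $i=0$. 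So they are not proportional. The special case $F_{2,d,1,0}$ is handled directly.

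Membership is the main work. I plan to expand $F(x+y)^a$ and show that the coefficient of $x^{m}y^{N-m}$ vanishes for every $m\le d-1$ with $N-m\le d-1$. Here $N=\deg F+a$. Each such coefficient is a finite alternating sum of products of three binomials, namely $\sum_i(-1)^i\binom{d-1-i}{r}\binom{r+i}{r}\binom{a}{j-i}$. I would try to prove its vanishing in the window by a generating-function argument. The factor $(-1)^i\binom{r+i}{r}$ is the coefficient of $u^i$ in $(1+u)^{-r-1}$. The factor $\binom{d-1-i}{r}$ can be written via a contour or coefficient extraction, $[v^{d-1-i-r}](1-v)^{-r-1}$. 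After that, the sum collapses to a coefficient in a product of rational functions. That coefficient should be visibly zero in the required range of $m$.

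An alternative route is a derivative characterization. The polynomial $f$ lies in the colon exactly when $f(x+y)^a$ has no monomials $x^my^{N-m}$ with $N-d<m<d$. Set $y=1$; this becomes a statement about $f(x,1)(x+1)^a$ having a gap of consecutive vanishing coefficients. That is equivalent to a Wronskian/hypergeometric-type condition. One could recognize $F_1(x,1)$ as a truncated Jacobi-type polynomial, the Padé approximant of $(1+x)^{-a}$. Indeed, the colon elements of the right degree are exactly numerators of Padé-type approximants. So I would first try to identify $F_{1}$ and $G_{2}$ with known Padé numerators of $(1+x)^{-a}$ and $(1+x)^{a}$, whose explicit hypergeometric coefficients match the double-binomial form. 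The $y^2$ and $y$ factors in $F_2,G_1$ then reflect the shifted approximants.

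The hardest step is the binomial-sum identity establishing membership, especially for $G_2$ with its extra factor $(i-1)$. For $G_2$ I would write $(i-1)=(i+\frac{a-2}{2})-\frac a2$. Then $G_2$ becomes a combination of two sums of the $F_1$-type shape, with shifted parameters, and the identity proved for the basic sum can be reused.
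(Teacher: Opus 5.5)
You set up the right framework, the same one the paper uses: degrees from \autoref{colon generator degrees}, then \autoref{in colon ideal with right degrees implies generation}. Your non-divisibility checks are fine. For $a$ odd it is immediate, since $y\nmid F_{1,d,a,0}$, $y^{2}\mid F_{2,d,a,0}$, and the cofactor would be linear.

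\textbf{The gap.} Membership is the real content of the theorem, and none of the routes you sketch establishes it. Put $r=\frac{a-1}{2}$ and let $c_j$ be the coefficient of $x^{d+r-j}y^j$ in $F_{1,d,a,0}(x+y)^a$.

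\emph{The $v$-extraction is circular.} Writing $\binom{d-1-i}{r}=[v^{d-1-i-r}](1-v)^{-r-1}$ gives $c_j=[u^jv^{d-1-r}]\,(1+u)^{2r+1}(1-v)^{-r-1}(1+uv)^{-r-1}$. But $[v^{d-1-r}](1-v)^{-r-1}(1+uv)^{-r-1}$ is exactly $F_{1,d,a,0}(1,u)$. So this only restates $c_j=[u^j](1+u)^{a}F_{1,d,a,0}(1,u)$, and nothing in it is visibly zero.

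\emph{The Pad\'e identification is only asserted.}

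\emph{Your $G_2$ split is off by one.} The term $(i+\frac{a-2}{2})\binom{\frac{a-2}{2}+i}{\frac{a-2}{2}}$ is not a multiple of a single binomial $\binom{R+i}{R}$. The split that works, with $s=\frac a2$, is $(i-1)\binom{s-1+i}{s-1}=s\binom{s+i}{s}-(s+1)\binom{s-1+i}{s-1}$. This gives $G_{2,d,a,0}=s\,y^{-1}G_{1,d+1,a,0}-(s+1)F_{1,d,a-1,0}$. Both pieces do lie in $(x^d,y^d):(x+y)^a$, because $(x^{d+1},y^{d+1}):y\subseteq(x^d,y^d)$ and colon ideals grow with the exponent. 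But this only helps after membership of $F_1$ and $G_1$ is already proved.

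\textbf{How to close it along your own lines.} Treat the extra binomial as a polynomial weight and apply $\theta=u\frac{\partial}{\partial u}$. We have $c_j=\sum_i(-1)^ip(i)\binom{r+i}{r}\binom{2r+1}{j-i}$ with $p(i)=\binom{d-1-i}{r}$, which has degree $r$ in $i$. For $j\le d-1$ the sum may run over all $0\le i\le j$, because $p(i)=0$ for $d-r\le i\le d-1$. Hence $c_j=[u^j]\,(1+u)^{2r+1}\,p(\theta)(1+u)^{-r-1}$. Since $\theta^k(1+u)^{-m}=(1+u)^{-m-k}q_k(u)$ with $\deg q_k\le k$, we get $p(\theta)(1+u)^{-r-1}=(1+u)^{-2r-1}Q(u)$ with $\deg Q\le r$. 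Therefore $c_j=[u^j]Q=0$ for $r+1\le j\le d-1$, which is exactly the required window.

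The same argument handles $F_2$, $G_1$ and $G_2$. In each case $\deg p$ plus the exponent $m$ in $(1+u)^{-m}$ equals $a$. The power of $y$ factored out ($y^2$, $y$, or $1$) shifts the vanishing range onto the window. The factor $(i-1)$ in $G_2$ is harmless: it raises $\deg p$ by one, while $m$ drops to $s$.

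\textbf{How the paper does it.} The paper avoids closed-form sums altogether. It inducts on $a$ from the base cases $a=1,2$ and verifies explicit relations such as $(d-1)(x+y)G_1=(d-b-1)yF_1+(d+b)F_2$, where $b=\frac{a-1}{2}$, the $G$'s are for exponent $a+1$ and the $F$'s for exponent $a$. Membership then propagates from each exponent to the next.
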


Note that the degrees of the generators in this theorem align with part (2) of \autoref{colon generator degrees}. We remark that each pair of forms described in the theorem are not ``redundant'' in the sense that $(F_{1,d,a,0})\not\subseteq (F_{2,d,a,0})$ and vice versa; this is easily seen by the fact that $F_{2,d,a,0}$ has an irreducible factor of $y$ while $F_{1,d,a,0}$ is not a multiple of $y$. (Note that $P$ is a unique factorization domain.) The same remark holds for $G_{1,d,a,0}$ and $G_{2,d,a,0}$ with the obvious necessary modifications.

To simplify the presentation of our proof, we will suppress the subscripts $d$, $a$, and $0$ on each generator and simply write $F_{1}$, $F_{2}$, $G_{1}$, and $G_{2}$. The notation $\text{cff}_{f}(m)$ will stand for the coefficient of the monomial $m$ in the polynomial $f$. We also point out that the proof, while quite lengthy, is really the same argument carried out four times: we make an ansatz of weights to write each generator as a $P$-linear combination of two others and then show these weights are indeed correct.

\begin{proof}
Fix any $d\geq 2$. Our proof goes by induction on $a$, and we split into cases according to the parity of $a$. Take $a=1$ as the base case for odd $a$. To see that $F_{1}\in (x^{d},y^{d}):(x+y)$, we compute the coefficient of $x^{d-i}y^{i}$ in $(x+y)F_{1}$. This coefficient comes from the $i^{\text{th}}$ and $(i-1)^{\text{st}}$ terms in the expansion of $F_{1}$:
\begin{equation*}
    \text{cff}_{(x+y)F_{1}}(x^{d-i}y^{i})=(-1)^{i}+(-1)^{i-1}\ \implies\ \text{cff}_{(x+y)F_{1}}(x^{d-i}y^{i})=0\ \text{for all}\ i\in\{1,\dots,d-1\}.
\end{equation*}
We then conclude
\begin{equation*}
    (x+y)F_{1}=x^{d}+(-1)^{d-1}y^{d}\quad\implies\quad F_{1}\in (x^{d},y^{d}):(x+y).
\end{equation*}
For $F_{2}$, recall that we specially defined $F_{2}=(-1)^{d-2}(d-1)y^{d}$ when $a=1$ (and $n=0$). As a multiple of $y^{d}$, the desired membership $F_{2}\in (x^{d},y^{d}):(x+y)$ is obvious.

Now take $a=2$ as the base case for even $a$. To see that $G_{1}\in(x^{d},y^{d}):(x+y)^{2}$, we compute the coefficient of $x^{d-i}y^{i+1}$ in $(x+y)^{2}G_{1}$. In view of $(x+y)^{2}=x^{2}+2xy+y^{2}$, this coefficient comes from the $i^{\text{th}}$, $(i-1)^{\text{st}}$, and $(i-2)^{\text{nd}}$ terms in the expansion of $G_{1}$:
\begin{align*}
    & \text{cff}_{(x+y)^{2}G_{1}}(x^{d-i}y^{i+1})=(-1)^{i}(i+1)+2(-1)^{i-1}(i)+(-1)^{i-2}(i-1) \\
    \implies\quad & \text{cff}_{(x+y)^{2}G_{1}}(x^{d-i}y^{i+1})=0\ \text{for all}\ i\in\{1,\dots,d-2\}.
\end{align*}
From here, we see that
\begin{equation*}
    (x+y)^{2}G_{1}=x^{d}y+\underbrace{2(-1)^{d-2}(d-1)xy^{d}+(-1)^{d-3}(d-2)xy^{d}+(-1)^{d-2}(d-1)y^{d+1}}_{\text{multiple of}\ y^{d}}.
\end{equation*}
The preceding line says $G_{1}\in (x^{d},y^{d}):(x+y)^{2}$ holds. We complete this process for $G_{2}$ as well, this time computing the coefficient of $x^{d+1-i}y^{i}$. Once again, we look at the $i^{\text{th}}$, $(i-1)^{\text{st}}$, and $(i-2)^{\text{nd}}$ terms in the expansion of $G_{2}$:
\begin{align*}
    & \text{cff}_{(x+y)^{2}G_{2}}(x^{d+1-i}y^{i})=(-1)^{i}(i-1)+2(-1)^{i-1}(i-2)+(-1)^{i-2}(i-3) \\
    \implies\quad & \text{cff}_{(x+y)^{2}G_{2}}(x^{d+1-i}y^{i})=0\ \text{for all}\ i\in\{2,\dots,d-2\}.
\end{align*}
From here, we see that
\small
\begin{equation*}
    (x+y)^{2}G_{2}=\underbrace{-x^{d+1}-2x^{d}y}_{\text{multiple of}\ x^{d}}+\underbrace{2(-1)^{d-1}(d-2)xy^{d}+(-1)^{d-2}(d-3)xy^{d}+(-1)^{d-1}(d-2)y^{d+1}}_{\text{multiple of}\ y^{d}}.
\end{equation*}
\normalsize
This says that $G_{2}\in(x^{d},y^{d}):(x+y)^{2}$ holds, completing the proof of the base cases.

Proceeding to the inductive step, suppose first $a$ is odd. Our inductive hypothesis is that $(x^{d},y^{d}):(x+y)^{a}$ is generated by $F_{1}$ and $F_{2}$, and we must show that $(x^{d},y^{d}):(x+y)^{a+1}$ is generated by $G_{1}$ and $G_{2}$. We prove that $(x+y)G_{1}, (x+y)G_{2}\in (x^{d},y^{d}):(x+y)^a$ by showing that they are linear combinations of $F_{1}$ and $F_{2}$, and therefore $G_1, G_2\in (x^{d},y^{d}):(x+y)^{a+1}$. Then \autoref{in colon ideal with right degrees implies generation} assures us that $G_{1}$ and $G_{2}$ generate the colon ideal.

Define $b=\frac{a-1}{2}$. Setting
\begin{equation}\label{G1 goal}
    \alpha=d-1,\ \beta=d-b-1,\ \text{and}\ \gamma=d+b,\ \ \text{we claim that}\ \ \alpha\cdot(x+y)G_{1}=\beta\cdot yF_{1}+\gamma\cdot F_{2}.
\end{equation}
To see this, we compute the coefficient of $\displaystyle x^{d-b-i}y^{i}$ on both sides. For $(x+y)G_{1}$, such a monomial is obtained via the $(i-1)^{\text{st}}$ and $(i-2)^{\text{nd}}$ terms from the summation in $G_{1}$. It is
\begin{align*}
    \text{cff}_{(x+y)G_{1}}(x^{d-b-i}y^{i}) & =(-1)^{i-1}\binom{d-2-(i-1)}{b}\binom{(b+1)+(i-1)}{b+1} \\
    & \quad\quad +(-1)^{i-2}\binom{d-2-(i-2)}{b}\binom{(b+1)+(i-2)}{b+1} \\
    & = (-1)^{i-1}\binom{d-1-i}{b}\binom{b+i}{b+1}+(-1)^{i}\binom{d-i}{b}\binom{b-1+i}{b+1}. \numberthis\label{cff-(x+y)G1}
\end{align*}
To compute the corresponding coefficient in $yF_{1}$, note that the multiplication by $y$ indicates we must look at the $(i-1)^{\text{st}}$ term in the expansion of $F_{1}$:
\begin{align*}
    \text{cff}_{yF_{1}}(x^{d-b-i}y^{i}) & =(-1)^{i-1}\binom{d-1-(i-1)}{b}\binom{b+(i-1)}{b} \\ 
    & =(-1)^{i-1}\binom{d-i}{b}\binom{b-1+i}{b}. \numberthis\label{cff-yF1}
\end{align*}
Finally, recall that $F_{2}$ is a multiple of $y^{2}$, so the corresponding coefficient comes from the $(i-2)$ term in the expansion of $F_{2}$:
\begin{align*}
    \text{cff}_{F_{2}}(x^{d-b-i}y^{i}) & =(-1)^{i-2}\binom{d-3-(i-2)}{b-1}\binom{(b+1)+(i-2)}{b+1} \\
    & =(-1)^{i}\binom{d-1-i}{b-1}\binom{b-1+i}{b+1}. \numberthis\label{cff-F2}
\end{align*}
Our task is to show that $\alpha\cdot\text{cff}_{(x+y)G_{1}}(x^{d-b-i}y^{i})=\beta\cdot\text{cff}_{yF_{1}}(x^{d-b-i}y^{i})+\gamma\cdot\text{cff}_{F_{2}}(x^{d-b-i}y^{i})$. We will assume here that $i$ is odd, the case ``$i$ is even'' a simple change of sign in what follows. Define
\begin{equation*}
    A=\binom{d-i}{b}\qquad\text{and}\qquad B=\binom{b-1+i}{b+1}.
\end{equation*}
We rewrite \eqref{cff-(x+y)G1} in terms of $A$ and $B$:
\begin{align*}
    \alpha\cdot\text{cff}_{(x+y)G_{1}}(x^{d-b-i}y^{i}) & =(d-1)\left[\frac{d-b-i}{d-i}A\cdot\frac{b+i}{i-1}B-AB\right] \\
    & =AB\left[(d-1)\left(\frac{(d-b-i)(b+i)}{(d-i)(i-1)}-1\right)\right].
\end{align*}
Carry out these proceedings for the sum of \eqref{cff-yF1} and \eqref{cff-F2} as well:
\begin{align*}
    \beta\cdot\text{cff}_{yF_{1}}(x^{d-b-i}y^{i})+\gamma\cdot\text{cff}_{F_{2}}(x^{d-b-i}y^{i}) & =(d-b-1)\left[A\cdot\frac{b+1}{i-1}B\right]-(d+b)\left[\frac{b}{d-i}A\cdot B\right] \\
    & =AB\left[\frac{(d-b-1)(b+1)}{i-1}-\frac{(d+b)(b)}{d-i}\right].
\end{align*}
It remains to verify that the bracketed expressions on the right-hand sides above are equal. After clearing denominators via multiplication by $(d-i)(i-1)$, expanding those expressions reveals that they are both equal to $d^{2}b-db^{2}-2dbi+d^{2}-db+b^{2}-di+2bi-d+i$. This proves $\eqref{G1 goal}$.

Keeping $b=\frac{a-1}{2}$, we next claim that
\begin{equation*}
    \alpha=1-d,\ \ \beta=d-1,\ \ \gamma=(d-1)+(b+1)(d-b-1),\ \ \text{and}\ \ \delta=(b+1)(d+b)
\end{equation*}
\begin{equation}\label{G2 goal}
    \text{will satisfy}\qquad \alpha\cdot (x+y)G_{2}=(\beta x+\gamma y)\cdot F_{1}+\delta\cdot F_{2}.
\end{equation}
To see this, we compute the coefficient of $x^{d-b-i}y^{i}$ on both sides. For $(x+y)G_{2}$, such a monomial is obtained via the $i^{\text{th}}$ and $(i-1)^{\text{st}}$ terms from the summation in $G_{2}$. It is
\begin{align*}
    \text{cff}_{(x+y)G_{2}}(x^{d-b-i}y^{i}) & = (-1)^{i}\binom{d-1-i}{b}\binom{b+i}{b}(i-1) \\ 
    & \quad\quad +(-1)^{i-1}\binom{d-1-(i-1)}{b}\binom{b+(i-1)}{b}\big((i-1)-1\big) \\
    & =(-1)^{i}\binom{d-1-i}{b}\binom{b+i}{b}(i-1) \\
    & \quad\quad +(-1)^{i-1}\binom{d-i}{b}\binom{b-1+i}{b}(i-2). \numberthis\label{cff-(x+y)G2}
\end{align*}
To compute the corresponding coefficient of $(\beta x+\gamma y)F_{1}$, we look at the $i^{\text{th}}$ and $(i-1)^{\text{st}}$ terms in the expansion of $F_{1}$. As the latter was computed in \eqref{cff-yF1}, we only provide the former below:
\begin{equation}\label{cff-xF1}
    \text{cff}_{xF_{1}}(x^{d-b-i}y^{i})=(-1)^{i}\binom{d-1-i}{b}\binom{b+i}{b}.
\end{equation}
Finally, recall that $F_{2}$ is a multiple of $y^{2}$, so the corresponding coefficient comes from the $(i-2)$ term in the expansion of $F_{2}$. This was found in \eqref{cff-F2}.

Our task is to show that $\alpha\cdot\text{cff}_{(x+y)G_{2}}=\beta\cdot\text{cff}_{xF_{1}}+\gamma\cdot\text{cff}_{yF_{1}}+\delta\cdot\text{cff}_{F_{2}}$. We will assume here that $i$ is even, the case ``$i$ is odd'' a simple change of sign in what follows. Define
\begin{equation*}
    A=\binom{d-1-i}{b}\qquad\text{and}\qquad B=\binom{b-1+i}{b}.
\end{equation*}
We rewrite \eqref{cff-(x+y)G2} in terms of $A$ and $B$:
\begin{align*}
    \alpha\cdot\text{cff}_{(x+y)G_{2}}(x^{d-b-i}y^{i}) & =(1-d)\left[A\cdot\frac{b+i}{i}B\cdot(i-1)-\frac{d-i}{d-b-i}A\cdot B\cdot(i-2)\right] \\
    & =AB\left[(1-d)\left(\frac{(b+i)(i-1)}{i}-\frac{(d-i)(i-2)}{d-b-i}\right)\right]. \numberthis \label{G2-gen cff}
\end{align*}
Next, do this for the sum of coefficients computed in \eqref{cff-xF1} and \eqref{cff-yF1}:
\small
\begin{align*}
    \beta\cdot\text{cff}_{xF_{1}}(x^{d-b-i}y^{i})+\gamma\cdot\text{cff}_{yF_{1}}(x^{d-b-i}y^{i}) & =(d-1)\left[A\cdot\frac{b+i}{i}B\right] \\
    & \quad\quad -((d-1)+(b+1)(d-b-1))\left[\frac{d-i}{d-b-i}A\cdot B\right] \\
    & =AB\left[(d-1)\left(\frac{b+i}{i}-\frac{d-i}{d-b-i}\right)\right. \\
    & \qquad\qquad\qquad \left.-\frac{(b+1)(d-b-1)(d-i)}{d-b-i}\right].
\end{align*}
\normalsize
Finally, give this treatment to the coefficient from \eqref{cff-F2}:
\begin{align*}
    \delta\cdot\text{cff}_{F_{2}}(x^{d-b-i}y^{i}) & =(b+1)(d+b)\left[\frac{b}{d-b-i}A\cdot\frac{i-1}{b+1}B\right] \\
    & =AB\left[\frac{(d+b)(b)(i-1)}{d-b-i}\right].
\end{align*}
Piecing this information together, we have
\small
\begin{align*}
    \beta \cdot\text{cff}_{xF_{1}}(x^{d-b-i}y^{i})+\gamma \cdot\text{cff}_{yF_{1}}(x^{d-b-i}y^{i}) & +\delta \cdot\text{cff}_{F_{2}}(x^{d-b-i}y^{i}) \\
    & =AB\left[(d-1)\left(\frac{b+i}{i}-\frac{d-i}{d-b-i}\right)\right. \\
    & \qquad\qquad \left.-\,\frac{(b+1)(d-b-1)(d-i)}{d-b-i}+\frac{(d+b)(b)(i-1)}{d-b-i}\right].
\end{align*}
\normalsize
It remains to verify that the bracketed expression on the right-hand side above is equal to that in \eqref{G2-gen cff}. After clearing denominators via multiplication by $i(d-b-i)$, expanding those expressions reveals that they are both equal to $-d^{2}bi+db^{2}i+2dbi^{2}+d^{2}b-db^{2}-d^{2}i-dbi-b^{2}i+di^{2}-2bi^{2}-db+b^{2}+di+2bi-i^{2}$. This proves \eqref{G2 goal} and completes the proof when $a$ is odd.

Suppose now that $a$ is even. Our inductive hypothesis is that $(x^{d},y^{d}):(x+y)^{a}$ is generated by $G_{1}$ and $G_{2}$, and we must show that $(x^{d},y^{d}):(x+y)^{a+1}$ is generated by $F_{1}$ and $F_{2}$. We will show that $(x+y)F_{1}$ and $(x+y)F_{2}$ are linear combinations of $G_{1}$ and $G_{2}$, which implies $F_1, F_2\in (x^{d},y^{d}):(x+y)^{a+1}$. Then \autoref{in colon ideal with right degrees implies generation} will imply $F_{1}$ and $F_{2}$ generate the colon ideal.

Define $b=\frac{a}{2}$. Setting
\begin{equation}\label{F1 goal}
    \alpha=b,\ \beta=b(b+2-d),\ \text{and}\ \gamma=b-d,\ \ \text{we claim that}\ \ \alpha\cdot(x+y)F_{1}=\beta\cdot G_{1}+\gamma\cdot G_{2}.
\end{equation}
To see this, we compute the coefficient of $x^{d-b-i}y^{i}$ on both sides. For $(x+y)F_{1}$, such a monomial is obtained via the $i^{\text{th}}$ and $(i-1)^{\text{st}}$ terms in the expansion of $F_{1}$. These are given by \eqref{cff-xF1} and \eqref{cff-yF1}, respectively.

Recall that $G_{1}$ is a multiple of $y$, so we look at the $(i-1)^{\text{st}}$ term in the expansion of $G_{1}$:
\begin{align*}
    \text{cff}_{G_{1}}(x^{d-b-i}y^{i}) & =(-1)^{i-1}\binom{d-2-(i-1)}{b-1}\binom{b+(i-1)}{b} \\
    & =(-1)^{i-1}\binom{d-1-i}{b-1}\binom{b-1+i}{b}. \numberthis \label{cff-G1}
\end{align*}
For $G_{2}$, we need only look at the $i^{\text{th}}$ term in the summation:
\begin{equation}
    \text{cff}_{G_{2}}(x^{d-b-i}y^{i})=(-1)^{i}\binom{d-1-i}{b-1}\binom{b-1+i}{b-1}(i-1) \numberthis\label{cff-G2}.
\end{equation}
Our task is to show that $\alpha\cdot\text{cff}_{(x+y)F_{1}}(x^{d-b-i}y^{i})=\beta\cdot\text{cff}_{G_{1}}(x^{d-b-i}y^{i})+\gamma\cdot\text{cff}_{G_{2}}(x^{d-b-i}y^{i})$. We will assume here that $i$ is odd, the case ``$i$ is even'' a simple change of sign in what follows. Define
\begin{equation*}
    A=\binom{d-1-i}{b-1}\qquad\text{and}\qquad B=\binom{b-1+i}{b}.
\end{equation*}
We rewrite the sum of \eqref{cff-xF1} and \eqref{cff-yF1} in terms of $A$ and $B$:
\begin{align*}
    \alpha\cdot\text{cff}_{(x+y)F_{1}}(x^{d-b-i}y^{i}) & =b\left[-\frac{d-b-i}{b}A\cdot\frac{b+i}{i}B+\frac{d-i}{b}A\cdot B\right] \\
    & =AB\left[(d-i)-\frac{(d-b-i)(b+i)}{i}\right].
\end{align*}
Do the same for the sum of \eqref{cff-G1} and \eqref{cff-G2}:
\begin{align*}
    \beta\cdot\text{cff}_{G_{1}}(x^{d-b-i}y^{i})+\gamma\cdot\text{cff}_{G_{2}}(x^{d-b-i}y^{i}) & =b(b+2-d)\big[AB\big]-(b-d)\left[A\cdot\frac{b}{i}(i-1)B\right] \\
    & =AB\left[b\left((b+2-d)-\frac{(b-d)(i-1)}{i}\right)\right].
\end{align*}
It remains to verify that the bracketed expressions on the right-hand sides above are equal. After clearing denominators via multiplication by $i$, expanding those expressions reveals that they are both equal to $-db+b^{2}+2bi$. This proves \eqref{F1 goal}.

We make a minor adjustment to $b$ here. Taking $b=\frac{a-2}{2}$, our last claim is that
\begin{equation*}
    \alpha=-(b+2),\ \ \beta=d-1,\ \ \gamma=(b+2)(d-b-2),\ \ \text{and}\ \ \delta=d-b-1
\end{equation*}
\begin{equation}\label{F2 goal}
    \text{will satisfy}\qquad \alpha(x+y)F_{2}=(\beta x+\gamma y)\cdot G_{1}+\delta\cdot yG_{2}.
\end{equation}
To see this, we compute the coefficient of $x^{d-b-i}y^{i}$ on both sides. For $(x+y)F_{2}$, such a monomial is obtained via the $(i-2)^{\text{nd}}$ and $(i-3)^{\text{rd}}$ terms in the expansion of $F_{2}$. It is
\begin{align*}
    \text{cff}_{(x+y)F_{2}}(x^{d-b-i}y^{i}) & =(-1)^{i-2}\binom{d-3-(i-2)}{b}\binom{b+2+(i-2)}{b+2} \\ 
    & \quad\quad +(-1)^{i-3}\binom{d-3-(i-3)}{b}\binom{b+2+(i-3)}{b+2}\\
    & =(-1)^{i}\binom{d-1-i}{b}\binom{b+i}{b+2}+(-1)^{i-1}\binom{d-i}{b}\binom{b-1+i}{b+2}. \numberthis\label{cff-(x+y)F2}
\end{align*}
Recall that $G_{1}$ is a multiple of $y$, so we look at the $(i-1)^{\text{st}}$ and $(i-2)^{\text{nd}}$ terms in the expansion of $G_{1}$:
\small
\begin{equation}
    \begin{aligned}
        \text{cff}_{xG_{1}}(x^{d-b-i}y^{i}) & =(-1)^{i-1}\binom{d-1-i}{b}\binom{b+i}{b+1}\qquad \text{and} \\
        \text{cff}_{yG_{1}}(x^{d-b-i}y^{i}) & =(-1)^{i-2}\binom{d-i}{b}\binom{b-1+i}{b+1}.
    \end{aligned}\label{cff-(bx+cy)G1}
\end{equation}
\normalsize
Finally, we need the $(i-1)^{\text{st}}$ term from the expansion of $G_{2}$:
\begin{align*}
    \text{cff}_{yG_{2}}(x^{d-b-i}y^{i}) & =(-1)^{i-1}\binom{d-1-(i-1)}{b}\binom{b+(i-1)}{b}\big((i-1)-1\big) \\
    & =(-1)^{i-1}\binom{d-i}{b}\binom{b-1+i}{b}(i-2).\numberthis\label{cff-yG2}
\end{align*}
We will show that $\alpha\cdot\text{cff}_{(x+y)F_{2}}(x^{d-b-i}y^{i})=\beta\cdot\text{cff}_{xG_{1}}(x^{d-b-i}y^{i})+\gamma\cdot\text{cff}_{yG_{1}}+\delta\cdot\text{cff}_{yG_{2}}(x^{d-b-i}y^{i})$. Assume here that $i$ is odd, the case ``$i$ is even'' a simple change of sign in what follows. Define
\begin{equation*}
    A=\binom{d-i}{b}\qquad\text{and}\qquad B=\binom{b-1+i}{b+1}.
\end{equation*}
As before, we rewrite \eqref{cff-(x+y)F2} in terms of $A$ and $B$:
\begin{align*}
    \alpha\cdot\text{cff}_{(x+y)F_{2}}(x^{d-b-i}y^{i}) & =-(b+2)\left[-\frac{d-b-i}{d-i}A\cdot\frac{b+i}{b+2}B+A\cdot\frac{i-2}{b+2}B\right]\\
    & =AB\left[\frac{(d-b-i)(b+i)}{d-i}-(i-2)\right]. \numberthis\label{F2-gen cff}
\end{align*}
Carry this out for the coefficients in \eqref{cff-(bx+cy)G1}: 
\small
\begin{align*}
    \beta\cdot\text{cff}_{xG_{1}}(x^{d-b-i}y^{i})+\gamma\cdot\text{cff}_{yG_{1}}(x^{d-b-i}y^{i}) & = (d-1)\left[\frac{d-b-i}{d-i}A\cdot\frac{b+i}{i-1}B\right] \\
    & \qquad\qquad\qquad\qquad-(b+2)(d-b-2)\big[AB\big] \\[5pt]
    & =AB\left[\frac{(d-1)(d-b-i)(b+i)}{(d-i)(i-1)}-(b+2)(d-b-2)\right],
\end{align*}
\normalsize
as well as the coefficient from \eqref{cff-yG2}:
\begin{align*}
    \delta\cdot\text{cff}_{yG_{2}}(x^{d-b-i}y^{i}) & =(d-b-1)\left[A\cdot\frac{b+1}{i-1}B\cdot (i-2)\right] \\
    & =AB\left[\frac{(d-b-1)(b+1)(i-2)}{i-1}\right].
\end{align*}
Piecing this information together, we have
\small
\begin{align*}
    \beta\cdot\text{cff}_{xG_{1}}(x^{d-b-i}y^{i})+\gamma\cdot\text{cff}_{yG_{1}}(x^{d-b-i}y^{i}) & +\delta\cdot\text{cff}_{yG_{2}}(x^{d-b-i}y^{i}) \\
    & =AB\left[\frac{(d-1)(d-b-i)(b+i)}{(d-i)(i-1)}\right. \\
    & \qquad\qquad -\,\left.(b+2)(d-b-2)+\frac{(d-b-1)(b+1)(i-2)}{i-1}\right].
\end{align*}
\normalsize
It remains to verify that the bracketed expression on the right-hand side above is equal to that in \eqref{F2-gen cff}. After clearing denominators via multiplication by $(d-i)(i-1)$, expanding those expressions reveals that they are both equal to $dbi-b^{2}i-2bi^{2}-db+b^{2}+2di+2bi-2i^{2}-2d+2i$. This proves \eqref{F2 goal} and completes the proof when $a$ is even, establishing the theorem.
\end{proof}

Having dealt with the case where $d_{1}=d_{2}$, we now set our sights on generators of $(x^{d_{1}},y^{d_{2}}):(x+y)^{a}$ when $d_{1}\leq d_{2}$. The tool that will allow us to pass from the case $d_{1}=d_{2}$ to the case $d_{1}\leq d_{2}$ is the partial derivative. We begin by observing that the formulas provided in \autoref{formulas} are scalar multiples of the partial derivatives of $F_{1,d,a,0}$, $F_{2,d,a,0}$, $G_{1,d,a,0}$, and $G_{2,d,a,0}$ with respect to $x$. (Note however that when $a=1$, the identity for $F_{2}$ is ``degenerate'' in the sense that $F_{2,d,1,0}$ has no terms containing $x$, so its partial derivative is $0$. Because of this, we provide a separate ``meaningful'' recursion for the derivative of $F_{2,d,1,n}$ when $n\geq 1$.) We follow the standard convention that empty products are interpreted as $1$.

\begin{obs}\label{partial derivs of generators}
For every $n\geq 0$, the $n^{th}$ partial derivatives of $F_{1,d,a,0}$, $F_{2,d,a,0}$, $G_{1,d,a,0}$, and $G_{2,d,a,0}$ with respect to $x$ satisfy the following: when $a$ is odd,
\begin{equation*}
    \frac{\partial^{n}}{\partial x^{n}}[F_{1,d,a,0}]=\prod_{j=1}^{n}\left[\frac{a-1}{2}+j\right]\cdot F_{1,d,a,n}\quad\text{and}\quad \frac{\partial^{n}}{\partial x^{n}}[F_{2,d,a,0}]=\prod_{j=1}^{n}\left[\frac{a-3}{2}+j\right]\cdot F_{2,d,a,n}\, ;
\end{equation*}
when $a$ is even,
\begin{equation*}
    \frac{\partial^{n}}{\partial x^{n}}[G_{1,d,a,0}]=\prod_{j=1}^{n}\left[\frac{a-2}{2}+j\right]\cdot G_{1,d,a,n}\quad \text{and}\quad \frac{\partial^{n}}{\partial x^{n}}[G_{2,d,a,0}]=\prod_{j=1}^{n}\left[\frac{a-2}{2}+j\right]\cdot G_{2,d,a,n}.
\end{equation*}
In the special case $a=1$, we have the following derivative recursion for every $n\geq 1$:
\begin{equation*}
    \frac{\partial^{n-1}}{\partial x^{n-1}}\big[F_{2,d,1,1}\big]=(n-1)!\cdot F_{2,d,1,n}.
\end{equation*}
\end{obs}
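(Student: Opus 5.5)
The plan is to verify the identities coefficient by coefficient, differentiating term by term. Each of the four forms in \autoref{formulas} has the shape (possibly after factoring out a power of $y$, which is a constant for $\partial/\partial x$)
\begin{equation*}
    \sum_{i}(-1)^{i}\binom{N_{i}}{r}\,c_{i}\,x^{N_{i}-r}y^{i},
\end{equation*}
where $c_{i}$ is a factor not depending on $n$, and the $x$-exponent equals $N_{i}-r$. The relevant data are:
\begin{itemize}
    \item for $F_{1}$: $N_{i}=d-1-i$, $r=\frac{a-1}{2}$, and $N_{i}-r=d-\frac{a+1}{2}-i$;
    \item for $F_{2}$: $N_{i}=d-3-i$ and $r=\frac{a-3}{2}$;
    \item for $G_{1}$: $N_{i}=d-2-i$ and $r=\frac{a-2}{2}$;
    \item for $G_{2}$: $N_{i}=d-1-i$ and $r=\frac{a-2}{2}$, with $c_{i}$ including the factor $(i-1)$.
\end{itemize}
In each case one checks directly that the $x$-exponent of the $i$-th term is exactly $N_{i}-r$. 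I would first record this bookkeeping, together with the observation that passing from parameter $0$ to parameter $n$ replaces $\binom{N_{i}}{r}$ by $\binom{N_{i}}{r+n}$, lowers the $x$-exponent by $n$, and lowers the upper summation limit by $n$, leaving $c_{i}$ and the power of $y$ unchanged.

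Applying $\partial^{n}/\partial x^{n}$ to the $i$-th term multiplies it by the falling factorial $(N_{i}-r)(N_{i}-r-1)\cdots(N_{i}-r-n+1)$ and lowers the $x$-exponent by $n$. The key identity is then
\begin{equation*}
    \binom{N}{r}\prod_{j=0}^{n-1}(N-r-j)=\frac{N!}{r!\,(N-r-n)!}=\binom{N}{r+n}\prod_{j=1}^{n}(r+j),
\end{equation*}
valid for integers $N\ge r+n$, $r\geq 0$. This gives exactly the asserted scalar $\prod_{j=1}^{n}[r+j]$ in each of the four cases.

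For the remaining terms, those with $N_{i}-r<n$ (that is, $i$ beyond the new upper limit), the derivative kills them. These are precisely the indices excluded from the sum defining the form with parameter $n$, so the summation ranges match. I would also check that within the original range $N_{i}\ge r\ge 0$, so the manipulations above make sense.

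The edge cases are where I expect the only real care to be needed, since $r$ can be negative or the special definition intervenes:
\begin{itemize}
    \item For $F_{2}$ with $a=1$, the special form $F_{2,d,1,0}=(-1)^{d-2}(d-1)y^{d}$ has zero $x$-derivative, and the product $\prod_{j=1}^{n}[-1+j]$ vanishes for $n\geq 1$. So the identity reads $0=0$ (and is trivial for $n=0$).
    \item For the separate recursion, $F_{2,d,1,1}$ has $r=0$ and binomials $\binom{d-3-i}{0}=1$. The same falling-factorial identity with $n-1$ derivatives and $r=0$ produces the factor $\prod_{j=1}^{n-1}j=(n-1)!$ and the binomial $\binom{d-3-i}{n-1}$, which is precisely $F_{2,d,1,n}$.
\end{itemize}
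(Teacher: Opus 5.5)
Your proposal is correct and follows essentially the same route as the paper: term-by-term differentiation together with the identity $\binom{N}{r}\prod_{j=0}^{n-1}(N-r-j)=\binom{N}{r+n}\prod_{j=1}^{n}(r+j)$, plus the same separate computation (with $r=0$, yielding $(n-1)!$) for $F_{2,d,1,1}$. You additionally carry out explicitly what the paper leaves as ``mutatis mutandis'': the uniform $(N_{i},r,c_{i})$ bookkeeping for all four forms, the matching of summation ranges, and the degenerate $0=0$ case for $F_{2,d,1,0}$.
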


\begin{proof}
It is straightforward to see that the following is $n^{\text{th}}$ partial derivative of $F_{1,d,a,0}$ with respect to $x$:
\begin{equation*}
    \frac{\partial^{n}}{\partial x^{n}}[F_{1,d,a,0}]=\sum_{i=0}^{d-\frac{a+1}{2}-n}(-1)^{i}\binom{d-1-i}{\frac{a-1}{2}}\binom{\frac{a-1}{2}+i}{\frac{a-1}{2}}\left[\prod_{j=0}^{n-1}\left(d-\frac{a+1}{2}-i-j\right)\right] x^{d-\frac{a+1}{2}-i-n}y^{i}.
\end{equation*}
Observe that $d-\frac{a+1}{2}=d-\frac{a-1}{2}-1$. Now define the quantity $C$ as follows:
\begin{align*}
    C & =\binom{d-1-i}{\frac{a-1}{2}}\prod_{j=0}^{n-1}\left(d-\frac{a-1}{2}-i-j-1\right) \\
    & =\frac{(d-1-i)!}{\left(\frac{a-1}{2}\right)!\left(d-\frac{a-1}{2}-i-1\right)!}\cdot\left[d-\frac{a-1}{2}-i-1\right]\cdots\left[d-\frac{a-1}{2}-i-n\right] \\
    & =\frac{(d-1-i)!}{\left(\frac{a-1}{2}\right)!\left(d-\frac{a-1}{2}-i-(n+1)\right)!}.
\end{align*}
When we multiply $C$ by the reciprocal of $\left(\frac{a-1}{2}+1\right)\left(\frac{a-1}{2}+2\right)\cdots\left(\frac{a-1}{2}+n\right)$, we obtain
\begin{equation*}
    C\cdot\frac{1}{\left(\frac{a-1}{2}+1\right)\left(\frac{a-1}{2}+2\right)\cdots\left(\frac{a-1}{2}+n\right)}=\frac{(d-1-i)!}{\left(\frac{a-1}{2}+n\right)!\left(d-\frac{a-1}{2}-i-n-1\right)!}=\binom{d-1-i}{\frac{a-1}{2}+n}.
\end{equation*}
Thus, 
\begin{equation*}
    \frac{\frac{\partial^{n}}{\partial x^{n}}[F_{1,d,a,0}]}{\prod_{j=1}^{n}\left[\frac{a-1}{2}+j\right]}=\sum_{i=0}^{d-\frac{a+1}{2}-n}(-1)^{i}\binom{d-1-i}{\frac{a-1}{2}+n}\binom{\frac{a-1}{2}+i}{\frac{a-1}{2}}x^{d-\frac{a+1}{2}-n-i}y^{i},
\end{equation*}
\begin{equation*}
    \text{which is to say}\qquad \frac{\frac{\partial^{n}}{\partial x^{n}}[F_{1,d,a,0}]}{\prod_{j=1}^{n}\left[\frac{a-1}{2}+j\right]}=F_{1,d,a,n}.
\end{equation*}
Multiplying both sides of the equation above by $\prod_{j=1}^{n}\left[\frac{a-1}{2}+j\right]$ achieves the formula asserted in the observation. In particular, note that $\frac{\partial^{n}}{\partial x^{n}}[F_{1,d,a,0}]$ is a scalar multiple of $F_{1,d,a,n}$. Therefore,
\begin{equation*}
    \left(\frac{\partial^{n}}{\partial x^{n}}[F_{1,d,a,0}]\right)=(F_{1,d,a,n})\ \ \text{as ideals for all}\ n.
\end{equation*}
The formulas for the $n^{\text{th}}$ partial derivatives of $F_{2,d,a,0}$, $G_{1,d,a,0}$, and $G_{2,d,a,0}$ with respect to $x$ are obtained the same way \textit{mutatis mutandis}.

In the special case $a=1$, the $(n-1)^{\text{st}}$ partial derivative of $F_{2,d,1,1}$ for $n\geq 1$ is
\begin{equation*}
    \frac{\partial^{n-1}}{\partial x^{n-1}}\big[F_{2,d,1,1}\big]=y^{2}\sum_{i=0}^{d-2-n}(-1)^{i}(i+1)\left[\prod_{j=0}^{n-2}(d-3-i-j)\right]x^{d-2-n-i}y^{i}.
\end{equation*}
The formula asserted in the observation follows at once after noting
\begin{equation*}
    \prod_{j=0}^{n-2}(d-3-i-j)=(n-1)!\binom{d-3-i}{-1+n}.
\end{equation*}
\end{proof}

We need one more tool, namely that the partial derivative lies in a colon ideal with the exponents suitably adjusted.

\begin{lem}\label{partial deriv belongs to colon ideal}
Consider the ideal $(x^{d_{1}},y^{d_{2}}):(x+y)^{a}$ where $2\leq d_{1}\leq d_{2}$ and $a$ is a nonnegative integer, and let $k=d_{2}-d_{1}$. Assume $a>k$. If $f\in (x^{d_{1}+k},y^{d_{2}}):(x+y)^{a-k}$, then $\frac{\partial^{k}}{\partial x^{k}}[f]\in (x^{d_{1}},y^{d_{2}}):(x+y)^{a}$.
\end{lem}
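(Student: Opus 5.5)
The plan is to prove a slightly stronger statement by induction on the number of derivatives taken. The claim is that for every $j\in\{0,1,\dots,k\}$,
\begin{equation*}
    \frac{\partial^{j}}{\partial x^{j}}[f]\cdot(x+y)^{a-k+j}\in\big(x^{d_{1}+k-j},\,y^{d_{2}}\big).
\end{equation*}
Taking $j=k$ gives $\frac{\partial^{k}}{\partial x^{k}}[f]\cdot(x+y)^{a}\in(x^{d_{1}},y^{d_{2}})$, which is exactly the assertion of the lemma. The case $j=0$ is the hypothesis $f\in(x^{d_{1}+k},y^{d_{2}}):(x+y)^{a-k}$.

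For the inductive step, fix $j<k$ and set $h=\frac{\partial^{j}}{\partial x^{j}}[f]$, $m=a-k+j$ and $e=d_{1}+k-j$. Note that $m\geq 1$ because $a>k$, and $e-1\geq d_{1}$. By the inductive hypothesis there are $A,B\in P$ with
\begin{equation*}
    h\,(x+y)^{m}=A\,x^{e}+B\,y^{d_{2}}.
\end{equation*}
Apply $\frac{\partial}{\partial x}$ to both sides. Since $\frac{\partial}{\partial x}(x+y)^{m}=m(x+y)^{m-1}$ and $\frac{\partial}{\partial x}y^{d_{2}}=0$, this gives
\begin{equation*}
    h'(x+y)^{m}+m\,h\,(x+y)^{m-1}=A'x^{e}+e\,A\,x^{e-1}+B'y^{d_{2}},
\end{equation*}
where $'$ denotes $\frac{\partial}{\partial x}$. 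The right-hand side lies in $(x^{e-1},y^{d_{2}})$. Now multiply through by $(x+y)$:
\begin{equation*}
    h'(x+y)^{m+1}=(x+y)\big(A'x^{e}+eAx^{e-1}+B'y^{d_{2}}\big)-m\,h\,(x+y)^{m}.
\end{equation*}
The last term equals $m(Ax^{e}+By^{d_{2}})$, which lies in $(x^{e},y^{d_{2}})\subseteq(x^{e-1},y^{d_{2}})$. Hence $h'(x+y)^{m+1}\in(x^{e-1},y^{d_{2}})$, which is precisely the statement for $j+1$.

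The only delicate point is the appearance of the term $m\,h\,(x+y)^{m-1}$ when the product rule is applied. Differentiating alone does not isolate $h'(x+y)^{m}$. The fix is to multiply by $(x+y)$ rather than divide: this raises the power of $(x+y)$ by one, which is exactly what the target exponent $a-k+j+1$ requires, and it turns the unwanted term back into a multiple of the original relation.

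No characteristic issues arise in this argument, since we only differentiate and never divide by $m$ or $e$. Characteristic zero matters elsewhere in the paper, namely for ensuring that the derivatives in \autoref{partial derivs of generators} are nonzero multiples of the proposed generators.
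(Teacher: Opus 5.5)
Your proof is correct and takes essentially the same approach as the paper. The paper also proves the stronger claim $\frac{\partial^{j}}{\partial x^{j}}[f]\in(x^{d_{1}+k-j},y^{d_{2}}):(x+y)^{a-k+j}$ by induction on $j$: it differentiates the relation, multiplies by $(x+y)$, and absorbs the product-rule term $m\,h\,(x+y)^{m}$ using the inductive hypothesis, exactly as you do.
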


\begin{proof}
We shall prove something stronger, namely that
\begin{equation}\label{claim}
    \frac{\partial^{j}}{\partial x^{j}}[f]\in (x^{d_{1}+k-j},y^{d_{2}}):(x+y)^{a-k+j}\ \ \text{for all}\ \ 0\leq j\leq k.
\end{equation}
Going by induction on $j$ (up to $k$), there's nothing to prove for the base case $j=0$, so fix any $j\in\{1,\dots,k\}$. Our inductive hypothesis is that $\frac{\partial^{j-1}}{\partial x^{j-1}}[f]\in (x^{d_{1}+k-(j-1)},y^{d_{2}}):(x+y)^{a-k+(j-1)}$. We are therefore justified in writing
\begin{equation*}
    \frac{\partial^{j-1}}{\partial x^{j-1}}[f]\cdot (x+y)^{a-k+j-1}=\widetilde{g}\cdot x^{d_{1}+k-j+1}+\widetilde{h}\cdot y^{d_{2}}\quad\text{for some}\ \widetilde{g},\widetilde{h}\in P.
\end{equation*}
Taking the partial derivative with respect to $x$ on both sides yields
\begin{align*}
    \frac{\partial^{j} f}{\partial x^{j}}(x+y)^{a-k+j-1}+\, & \frac{\partial^{j-1}f}{\partial x^{j-1}}(a-k+j-1)(x+y)^{a-k+j-2} \\
    & =\left(\frac{\partial\widetilde{g}}{\partial x}x^{d_{1}+k-j+1}+\widetilde{g}(d_{1}+k-j+1)x^{d_{1}+k-j}\right)+\frac{\partial\widetilde{h}}{\partial x}y^{d_{2}} \\
    & =\left(\frac{\partial\widetilde{g}}{\partial x}x+\widetilde{g}(d_{1}+k-j+1)\right)x^{d_{1}+k-j}+\left(\frac{\partial\widetilde{h}}{\partial x}\right)y^{d_{2}}.
\end{align*}
The upshot of the preceding is that $\frac{\partial^{j} f}{\partial x^{j}}\cdot (x+y)^{a-k+j-1}+\frac{\partial^{j-1}f}{\partial x^{j-1}}\cdot (a-k+j-1)(x+y)^{a-k+j-2}$ lies in the ideal $(x^{d_{1}+k-j},y^{d_{2}})$. We multiply this expression by $(x+y)$ to reach
\begin{equation*}
    \frac{\partial^{j} f}{\partial x^{j}}\cdot (x+y)^{a-k+j}+\frac{\partial^{j-1}f}{\partial x^{j-1}}\cdot (a-k+j-1)(x+y)^{a-k+j-1}\in(x^{d_{1}+k-j},y^{d_{2}}).
\end{equation*}
The second term above contains $\frac{\partial^{j-1}f}{\partial x^{j-1}}\cdot (x+y)^{a-k+j-1}$, which by induction belongs to $(x^{d_{1}+k-j+1},y^{d_{2}})$. (Note here that $a-k+j-1>0$ because $0\leq k<a$ and $j\geq 1$.) As this ideal is contained in $(x^{d_{1}+k-j},y^{d_{2}})$, it follows immediately that
\begin{equation*}
    \frac{\partial^{j} f}{\partial x^{j}}\cdot (x+y)^{a-k+j}\in (x^{d_{1}+k-j},y^{d_{2}}),\ \text{i.e.}\ \frac{\partial^{j} f}{\partial x^{j}}\in(x^{d_{1}+k-j},y^{d_{2}}):(x+y)^{a-k+j},\ \text{the content of \eqref{claim}}.
\end{equation*}
\end{proof}

Now we are ready to prove the main theorem of this section, whose statement we recall here for the reader's convenience:

\generalgens*

Before embarking on the proof, we mention in passing that instead of $H_{d_{1},a,k}$ as given in part (1) of the theorem we may alternatively use
\begin{equation*}
    H_{d_{1},a,k}'=\sum_{j=0}^{d_2-a} (-1)^j \binom{d_2-j-1}{a-1} x^{d_2-a-j}y^j.
\end{equation*}
The expression $H_{d_{1},a,k}'$ is more akin to the formulas in \autoref{formulas}, but notice that its first terms will be multiples of $x^{d_{1}}$ since $d_{1}=d_{2}-k\leq d_{2}-a$ in part (1). It follows that $H_{d_{1},a,k}'+(-1)^{k-a}H_{d_{1},a,k}\in (x^{d_{1}})$ and therefore $(x^{d_1}, H_{d_{1},a,k})=(x^{d_1}, H_{d_{1},a,k}')$.

To simplify the presentation in our proof of (1), we will suppress the subscripts $d_{1}$ and $k$, simply writing $H_{a}$ when proving the theorem. We also recall that ``$\text{cff}_{f}(m)$'' refers to the coefficient of the monomial $m$ in the polynomial $f$.

\begin{proof}
Assume first that $1\leq a\leq k$. By \autoref{colon generator degrees}, one generator of the colon ideal has degree $d_{1}$ and the other has degree $d_{2}-a$. The element $x^{d_1}$ obviously belongs to the colon ideal and has one of the required generator degrees. Since $x^{d_{1}}\nmid H_{a}$, it remains by \autoref{in colon ideal with right degrees implies generation} to show that \(H_a\in(x^{d_{1}},y^{d_{2}}):(x+y)^{a}\).

Going by induction on $a$, take $a=1$. We show that $(x+y)H_{1}\in (x^{d_{1}},y^{d_{2}})$ holds. To see this, compute the coefficient of $x^{d_{1}-i}y^{k+i}$ in $(x+y)H_{1}$ for $1\leq i\leq d_{1}-1$, which comes from the $i^{\text{th}}$ and $(i-1)^{\text{st}}$ terms in the expansion of $H_{1}$:
\begin{align*}
    & \text{cff}_{(x+y)H_{1}}(x^{d_{1}-i}y^{k+i})=(-1)^{i}\binom{d_{1}-1-i}{0}+(-1)^{i-1}\binom{d_{1}-i}{0} \\
    \implies\quad & \text{cff}_{(x+y)H_{1}}(x^{d_{1}-i}y^{k+i})=0\ \text{for all}\ i\in\{1,\dots,d_{1}-1\}.
\end{align*}
Therefore,
\small
\begin{equation*}
    (x+y)H_{1}=y^{k}\big[x^{d_{1}}+(-1)^{d_{1}-1}y^{d_{1}}\big]=x^{d_{1}}y^{d_{2}-d_{1}}+(-1)^{d_{1}-1}y^{d_{2}}\ \implies\ H_{1}\in (x^{d_{1}},y^{d_{2}}):(x+y).
\end{equation*}
\normalsize

We next take as our inductive hypothesis that $H_{a}\in (x^{d_{1}},y^{d_{2}}):(x+y)^{a}$ for a fixed $a<k$ and endeavor to show that $H_{a+1}\in (x^{d_{1}},y^{d_{2}}):(x+y)^{a+1}$. Observe here that if we can prove that $(x+y)H_{a+1}-H_{a}\in (x^{d_1})$, then it will follow that
\begin{equation}\label{H_{a+1} goal}
    (x+y)^{a+1}H_{a+1}-(x+y)^{a}H_{a}\in (x^{d_1}) 
   \end{equation}
and therefore $ (x+y)^{a+1}H_{a+1}\in (x^{d_{1}},y^{d_{2}})$ as desired. Thus, we set our sights on proving $(x+y)H_{a+1}-H_{a}\in (x^{d_1})$ to complete the inductive step. This comes down to determining the coefficient of $x^{d_{1}-i}y^{k-a+i}$ in $(x+y)H_{a+1}$, which again arises from the and $i^{\text{th}}$ and $(i-1)^{\text{st}}$ terms in the expansion of $H_{a+1}$. With the aid of Pascal's rule, we obtain
\begin{align*}
    \text{cff}_{(x+y)H_{a+1}}(x^{d_{1}-i}y^{k-a+i}) & =(-1)^{i}\binom{d_{1}+a-1-i}{a}+(-1)^{i-1}\binom{d_{1}+a-i}{a} \\
    & =(-1)^{i-1}\binom{d_{1}+a-1-i}{a-1}.
\end{align*}
The upshot of this observation is the realization that
\small
\begin{align*}
    (x+y)H_{a+1} & =y^{k-a}\left[\binom{d_{1}+a-1}{a}x^{d_{1}}+\sum_{i=1}^{d_{1}-1}(-1)^{i-1}\binom{d_{1}+a-1-i}{a-1}x^{d_{1}-i}y^{i}+(-1)^{d_{1}-1}y^{d_{1}}\right] \\
    &=y^{k-a}\left[\binom{d_{1}+a-1}{a}x^{d_{1}}+\sum_{i=0}^{d_{1}-2}(-1)^{i}\binom{d_{1}+a-2-i}{a-1}x^{d_{1}-1-i}y^{i+1}+(-1)^{d_{1}-1}y^{d_{1}}\right].
\end{align*}
\normalsize
Notice that the first term is a multiple of $x^{d_{1}}$, so it lies in $(x^{d_{1}},y^{d_{2}})$. We therefore ignore it and, after incorporating the last term into the summation and extracting a factor of $y$, reach
\begin{equation*}
    (x+y)H_{a+1}\equiv y^{k-a+1}\sum_{i=0}^{d_{1}-1}(-1)^{i}\binom{d_{1}+a-2-i}{a-1}x^{d_{1}-1-i}y^{i}\quad \text{mod}\ (x^{d_{1}}),
\end{equation*}
which says precisely that $(x+y)H_{a+1}\equiv H_{a}\! \mod{(x^{d_{1}})}$. We conclude that \eqref{H_{a+1} goal} holds, completing the proof of assertion (1) in the theorem.

Now we assume that $k+1\leq a\leq d_{1}+d_{2}-2$. According to \autoref{colon generators when equigenerated}, we have
\begin{equation*}
    (x^{d_{2}},y^{d_{2}}):(x+y)^{a-k}=\begin{cases}
        (F_{1,d_{2},a-k,0},\, F_{2,d_{2},a-k,0}) & \text{when}\ a-k\ \text{is odd},\\
        (G_{1,d_{2},a-k,0},\, G_{2,d_{2},a-k,0}) & \text{when}\ a-k\ \text{is even}.
    \end{cases}
\end{equation*}
Suppose $a-k\geq 2$ is even. Then $G_{1,d_{2},a-k,k},\, G_{2,d_{2},a-k,k}\in(x^{d_{1}},y^{d_{2}}):(x+y)^{a}$ by \autoref{partial derivs of generators} and \autoref{partial deriv belongs to colon ideal}. Furthermore,
\small
\begin{equation*}
    \deg(G_{1,d_{2},a-k,k})=d_{2}-\frac{a-k}{2}-k=d_{2}-\frac{a+k}{2},\ \ \text{and similarly}\ \ \deg(G_{2,d_{2},a-k,k})=d_{2}-\frac{a+k}{2}.
\end{equation*}
\normalsize
\autoref{in colon ideal with right degrees implies generation} now guarantees that $G_{1,d_{2},a-k,k}$ and $G_{2,d_{2},a-k,k}$ generate the colon ideal. The argument proceeds identically (with the obvious necessary modifications) to show that $F_{1,d_{2},a-k,k}$ and $F_{2,d_{2},a-k,k}$ generate the colon ideal when $a-k\geq 3$ is odd.

It remains to prove the theorem when $a-k=1$ (i.e. $a=k+1$) and $k\geq 1$. The content of \autoref{partial derivs of generators} and \autoref{partial deriv belongs to colon ideal} still applies ``meaningfully'' to $F_{1,d_{2},1,k}$; more precisely, we have $F_{1,d_{2},1,k}$ is a nonzero element of $(x^{d_{2}-k},y^{d_{2}}):(x+y)^{1+k}$. For $F_{2,d_{2},1,k}$, we first verify directly that
\begin{equation*}
    F_{2,d_{2},1,1}\in(x^{d_{2}-1},y^{d_{2}}):(x+y)^{2}.
\end{equation*}
Toward this end, we compute the coefficient of $x^{d_{2}-i}y^{i+1}$ in $(x+y)^{2}F_{2,d_{2},1,1}$. Being a multiple of $y^{2}$, we look at the $(i-1)^{\text{st}}$, $(i-2)^{\text{nd}}$, and $(i-3)^{\text{rd}}$ terms in the expansion:
\begin{align*}
    & \text{cff}_{(x+y)^{2}F_{2,d_{2},1,1}}(x^{d_{2}-i}y^{i+1})=(-1)^{i-1}i+2(-1)^{i-2}(i-1)+(-1)^{i-3}(i-2) \\
    \implies\quad & \text{cff}_{(x+y)^{2}F_{2,d_{2},1,1}}(x^{d_{2}-i}y^{i+1})=0\ \text{for all}\ i\in\{2,\ldots,d_{2}-2\}.
\end{align*}
Therefore,
\small
\begin{equation*}
    (x+y)^{2}F_{2,d_{2},1,1}=x^{d_{2}-1}y^{2}+\underbrace{2(-1)^{d_{2}-3}(d_{2}-2)xy^{d_{2}}+(-1)^{d_{2}-4}(d_{2}-3)xy^{d_{2}}+(-1)^{d_{2}-3}(d_{2}-2)y^{d_{2}+1}}_{\text{multiple of}\ y^{d_{2}}}.
\end{equation*}
\normalsize
We obtain $F_{2,d_{2},1,1}\in(x^{d_{2}-1},y^{d_{2}}):(x+y)^{2}$ as desired. Now \autoref{partial derivs of generators} and (the proof of) \autoref{partial deriv belongs to colon ideal} imply $F_{2,d_{2},1,k}\in(x^{d_{2}-k},y^{d_{2}}):(x+y)^{1+k}$. From here, we compute:
\begin{equation*}
    \deg(F_{1,d_{2},1,k})=d_{2}-1-k=d_{2}-a,\quad \text{and similarly}\quad \deg(F_{2,d_{2},1,k})=d_{2}-a+1.
\end{equation*}
Thus, we may use \autoref{in colon ideal with right degrees implies generation} to conclude $(x^{d_{2}-k},y^{d_{2}}):(x+y)^{1+k}=(F_{1,d_{2},1,k},F_{2,d_{2},1,k})$.
\end{proof}

\section{Application to WLP in 3-Variable Monomial ACIs}

In this section we consider the question of classifying the values of $(a_1, a_2, a_3, t)$ for which WLP holds for the level almost complete intersection ring $A=\F[x,y,z]/I$, where the ideal $I=(x^{t+a_{1}},y^{t+a_{2}},z^{t+a_{3}},x^{a_{1}}y^{a_{2}}z^{a_{3}})$. We assume this generating set is minimal and that $a_{1},a_{2},a_{3}>0$. Because WLP is invariant under permutation of variables, we may without loss of generality relabel the variables so that $a_{1}\leq a_{2}\leq a_{3}$. In this ordering, the condition $a_3<2(a_1+a_2)$ below says (in the original unordered notation) that the largest of the three exponents is less than twice the sum of the other two. As an application of the results from the previous section, our main result in this section is the following:

\begin{thrm}\label{polynomials}
Let $d_{i}=t+a_{i}$ for $i\in\{1,2,3\}$. For fixed values of $a_1\leq a_2\leq a_3$ with $a_1+a_2+a_3$ divisible by $3$ and $a_3<2(a_1+a_2)$, there exist polynomials $P_1(t)$ and $P_2(t)$ such that:

{\rm a.} For $t_{0}\ge \frac{a_1+a_2+a_3}{3}$ even, WLP fails if and only if $P_1(t_{0})=0$.

{\rm b.} For $t_{0}\ge \frac{a_1+a_2+a_3}{3}$ odd, WLP fails if and only if $P_2(t_{0})=0$.

In particular, for fixed $a_1, a_2, a_3$ and fixed parity for $t_{0}$, either WLP fails for all $t_{0}\geq\frac{a_{1}+a_{2}+a_{3}}{3}$ of that parity (when $P_{i}$ is identically zero) or fails for at most $\deg(P_{i})$ such $t_{0}$.
\end{thrm}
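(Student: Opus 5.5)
We will reduce failure of WLP for $A=\F[x,y,z]/I$ to the failure of injectivity (or surjectivity) of a single map $\times\ell$ in one critical degree, and then express that failure as the vanishing of a determinant whose entries come from the generators of \autoref{thrm:colon generators in general}.

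First, since $A$ is level with socle degree determined by $d_1+d_2+d_3$ and $a_1+a_2+a_3$, and since $a_1+a_2+a_3=3s$ with $t\ge s$, the standard analysis (as in \cite{Mig-Mir-Nag-11}, Section 6, together with $\ell=x+y+z$ after rescaling the variables) shows that WLP holds if and only if $\times(x+y+z)\colon A_{m-1}\to A_m$ has maximal rank for the one degree $m$ where the Hilbert function of $A$ peaks/first becomes non-increasing. The conditions $3\mid a_1+a_2+a_3$ and $a_3<2(a_1+a_2)$ make this degree explicit. In that degree, the map is injective or surjective according to the comparison of $\dim A_{m-1}$ and $\dim A_m$. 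The cokernel of $\times\ell$ is $\F[x,y,z]/(I,\ell)\cong\F[x,y]/\bigl(x^{d_1},y^{d_2},(x+y)^{d_3},x^{a_1}y^{a_2}(x+y)^{a_3}\bigr)$, with $z\mapsto -(x+y)$.

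Because $\dim\ker-\dim\operatorname{coker}$ is fixed by the Hilbert function, failure of maximal rank is equivalent to the cokernel in degree $m$ being larger than expected. This in turn is equivalent to $x^{a_1}y^{a_2}(x+y)^{a_3}$ contributing fewer independent forms in degree $m$ modulo $J=(x^{d_1},y^{d_2},(x+y)^{d_3})$ than generically. Equivalently, we count relations $C(x+y)^{d_3}+Dx^{a_1}y^{a_2}(x+y)^{a_3}\in(x^{d_1},y^{d_2})$, as in the introduction. That is, we count pairs $(C,D)$ of the appropriate degrees with $C(x+y)^{t}+Dx^{a_1}y^{a_2}$ in the colon ideal $(x^{d_1},y^{d_2}):(x+y)^{a_3}$. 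By \autoref{thrm:colon generators in general} (with $d_1=t+a_1$, $d_2=t+a_2$, $k=a_2-a_1$, and $a=a_3>k$ since $a_3\ge a_2$), this colon ideal is generated by $Q_1,Q_2$, equal to $F_{\bullet}$ or $G_{\bullet}$ according to the parity of $a_3-k$. Their degrees are given in \autoref{colon generator degrees}. So the condition becomes: there exist $U_1,U_2$ with $U_1Q_1+U_2Q_2\equiv C(x+y)^t \pmod{x^{a_1}y^{a_2}}$, with prescribed degrees. Writing $U_1,U_2,C$ in the monomial basis, this is a linear system in the coefficients. Its matrix $M(t)$ has entries that are the binomial coefficients in \autoref{formulas}, namely coefficients of $Q_i$ on the monomials $x^{j}y^{i}$ with $j<a_1$ or $i<a_2$ (the only monomials that survive modulo $x^{a_1}y^{a_2}$ in the relevant degree), together with the coefficients of $(x+y)^t$.

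The key point is that the relevant degrees and the number of surviving monomials depend only on $a_1,a_2,a_3$, not on $t$. The low-degree monomials not divisible by $x^{a_1}y^{a_2}$ form a bounded set, and the multiplier degrees are bounded. Meanwhile the entries $\binom{t+a_2-1-i}{r}$, $\binom{t}{j}$, and similar expressions are polynomials in $t$, with $i,j,r$ ranging over bounded sets, because the indices are measured from the ends of the monomial range near $x^{\deg}$ and $y^{\deg}$. The parity of $t$ enters because the parity of $a_3-k$ is fixed, but the floor and ceiling in the degree formulas, and the sign factors $(-1)^i$ indexed from the $y$-end, depend on the parity of $t$. So for each parity we get a square matrix $M_1(t)$ or $M_2(t)$ with polynomial entries. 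WLP fails exactly when the system has an unexpected solution, i.e.\ when $\det M_i(t_0)=0$. Setting $P_i=\det M_i$ gives a and b. The final sentence then follows, since a nonzero polynomial has at most $\deg P_i$ roots.

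The main obstacle is the bookkeeping in the middle step: making the matrix genuinely square of size independent of $t$. This means showing that monomials far from both $x$- and $y$-ends are automatically killed or unconstrained, so that only boundedly many coefficient equations matter. It also means handling $(x+y)^{d_3}$ and the $(x^{d_1},y^{d_2})$ part cleanly. I would first fix the degree $m$ explicitly, using the level condition to compute the $e_i$ and check that the expected rank count equals the size of $M_i$. After that, verify the $t$-polynomiality entry by entry by reindexing the sums in \autoref{formulas} from the $y$-end.
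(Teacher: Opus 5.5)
\textbf{Gap: a nonzero solution of the square system has not been shown to force WLP failure.}

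Your route is the paper's. You reduce to the single critical degree $d=t+2s-1$, where $s=\frac{a_1+a_2+a_3}{3}$. For that reduction the precise inputs are Theorem 4.10(b)(1) of Cook--Nagel and Lemma 7.1 of Migliore--Mir\'o-Roig--Nagel, which give $h_{d-1}=h_d$, so the relevant map is square. You then substitute $z\mapsto-(x+y)$, use the generators $Q_1,Q_2$ of $(x^{d_1},y^{d_2}):(x+y)^{a_3}$, and solve $U_1Q_1+U_2Q_2\equiv C(x+y)^t \pmod{x^{a_1}y^{a_2}}$ as an $(a_1+a_2)\times(a_1+a_2)$ system. Identifying the cokernel of $\times\ell$ with $\F[x,y]/(J+(g))$, $g=x^{a_1}y^{a_2}(x+y)^{a_3}$, is a fine substitute for \autoref{lem1} and \autoref{lem2}.

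The gap is the step ``WLP fails exactly when the system has an unexpected solution, i.e.\ when $\det M_i(t_0)=0$.'' A square homogeneous system has vanishing determinant as soon as it has \emph{any} nonzero solution. So you must show that every nonzero $(U_1,U_2,C)$ reflects a genuine failure. If the system had structurally forced solutions, $\det M_i$ would vanish identically and the ``if'' directions of a and b would be false. Squareness, which you single out as the main obstacle, does not address this.

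The paper devotes a separate paragraph to exactly this point. A nonzero solution with $\tilde C=0$ would give a nonzero $K$ of degree $t-s-1$ with $Kx^{a_1}y^{a_2}\in(Q_1,Q_2)$. Then $K\in(x^t,y^t):(x+y)^{a_3}$, which is impossible because that colon ideal starts in degree $t-\lceil a_3/2\rceil>t-s-1$; this is where the paper uses $a_3<2(a_1+a_2)$. Finally \autoref{lem2}(b) turns a solution with $\tilde C\neq0$ into a failure.

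\textbf{How to close it in your cokernel formulation.} Write $U_1Q_1+U_2Q_2-C(x+y)^t=Dx^{a_1}y^{a_2}$. You need: if $(U_1,U_2,C)\neq0$ then $D\neq0$. Then $Dg\in J$ with $\deg D=t-s-1$ gives a nonzero cokernel in degree $d$. This last step uses $\dim(\F[x,y]/J)_d=t-s=\dim\F[x,y]_{t-s-1}$, which you should also check.
\begin{itemize}
\item Suppose $D=0$. Then $C(x+y)^{t+a_3}\in(x^{d_1},y^{d_2})$ with $\deg C=a-1$, where $a=\frac{2(a_1+a_2)-a_3}{3}$.
\item The map $\times(x+y)^{t+a_3}$ from degree $a-1$ to degree $d$ of $\F[x,y]/(x^{d_1},y^{d_2})$ is injective, by SLP and the count $a\le t+a_1+a_2-2s$. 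Hence $C=0$.
\item Then $U_1Q_1+U_2Q_2=0$. The forms $Q_1,Q_2$ have no common factor, since their ideal contains $x^{d_1}$ and $y^{d_2}$, and $\deg U_1<\deg Q_2$. So $Q_2$ divides $U_1$, forcing $U_1=0$ and then $U_2=0$.
\end{itemize}

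\textbf{Minor correction.} The floors and ceilings in $\deg Q_i$ do not depend on the parity of $t$, since $d_1+d_2-a_3=2t+a_1+a_2-a_3$. The two polynomials come solely from the signs $(-1)^i$ with $i$ measured from the $y$-end, which contribute factors $(-1)^t$.
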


We can write the polynomials in this theorem explicitly for small values of $a_1, a_2, a_3$ and prove that they do not have any ``large'' roots, meaning that WLP holds for all values of $t\geq\frac{a_{1}+a_{2}+a_{3}}{3}$ (see \autoref{example}). While the proof of this theorem will be written after relabeling so that $a_{1}\leq a_{2}\leq a_{3}$, note that the conclusion applies to any unordered triple of positive exponents satisfying the corresponding condition on the largest exponent relative to the other two.

In anticipation of the theorem's proof, we have two preparatory lemmas:

\begin{lem}\label{lem1}
Let $I=(x^{d_1}, y^{d_2}, z^{d_3}, x^{a_1}y^{a_2}z^{a_3})\subset \F[x, y, z]$, and $L=x+y+z$.
For a polynomial $f\in \F[x, y, z]$, we denote the image of $f$ under the homomorphism that sends $z$ to $-(x+y)$ by $\tilde{f}\in \F[x, y]$. Fix any degree $d$.
\begin{enumerate}[label=(\alph*)]
    \item Suppose $d$ is such that
    \begin{equation*}
        \times L:\big(\F[x,y,z]/(x^{d_{1}},y^{d_{2}},z^{d_{3}})\big)_{d-1}\ \longrightarrow\ \big(\F[x,y,z]/(x^{d_{1}},y^{d_{2}},z^{d_{3}})\big)_{d}\quad \text{is injective}.
    \end{equation*}
    If we have a homogeneous relation of degree $d$ (meaning each of the terms is homogeneous of degree $d$)
    \begin{equation}\label{wlp relation}
        FL=Ax^{d_1}+By^{d_2}+Cz^{d_3}+Dx^{a_1}y^{a_2}z^{a_3}
    \end{equation}
    with $F, A, B, C, D\in \F[x,y,z]$ and $F\notin I$, then $\tilde{D}\notin (x^{d_1-a_1}, y^{d_2-a_2} , (x+y)^{d_3-a_3})$.
    \item Conversely, if there exist homogeneous forms $A', B', C', D'\in \F[x, y]$ such that 
\begin{equation*}
    A'x^{d_1}+B'y^{d_2} + C'(x+y)^{d_3}+D'x^{a_1}y^{a_2}(x+y)^{a_3}=0
\end{equation*}
is a homogeneous relation of degree $d$ and 
$D'\notin (x^{d_1-a_1}, y^{d_2-a_2}, (x+y)^{d_3-a_3})$, then there exists a homogeneous relation of degree $d$ as in \eqref{wlp relation} with $F\notin I$.
\end{enumerate}
\end{lem}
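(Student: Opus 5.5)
Both parts come from one observation. Substituting $z\mapsto-(x+y)$ gives a surjection $\F[x,y,z]\to\F[x,y]$ with kernel $(L)$. It preserves degree, so a homogeneous form with $\tilde g=0$ can be written $g=L\cdot g'$ with $g'$ homogeneous of degree one less. Throughout, write $m=x^{a_1}y^{a_2}z^{a_3}$, $J=(x^{d_1},y^{d_2},z^{d_3})$ and $K=(x^{d_1-a_1},y^{d_2-a_2},z^{d_3-a_3})$. We will use two facts:
\begin{itemize}
\item Since $J$ is a monomial ideal and $0<a_i<d_i$, we have $J:m=K$.
\item The image of $K$ under the substitution is $\widetilde K=(x^{d_1-a_1},y^{d_2-a_2},(x+y)^{d_3-a_3})$, because $\widetilde{z^{d_3-a_3}}=\pm(x+y)^{d_3-a_3}$.
\end{itemize}

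For (a), argue by contradiction and suppose $\tilde D\in\widetilde K$.
\begin{enumerate}
\item Write $\tilde D=u\,x^{d_1-a_1}+v\,y^{d_2-a_2}+w\,(x+y)^{d_3-a_3}$ with $u,v,w\in\F[x,y]$ homogeneous.
\item Set $D_0=u\,x^{d_1-a_1}+v\,y^{d_2-a_2}+(-1)^{d_3-a_3}w\,z^{d_3-a_3}\in K$. Then $\widetilde{D_0}=\tilde D$, so $D-D_0=L\,E$ for a homogeneous $E$ of degree $\deg D-1$.
\item Since $D_0m\in Km\subseteq J$, substituting $D=D_0+LE$ into \eqref{wlp relation} gives $(F-Em)L\in J$.
\item Both $F$ and $Em$ have degree $d-1$. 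The injectivity hypothesis on $\times L$ in degree $d-1$ then forces $F-Em\in J$.
\item Hence $F\in J+(m)=I$, contradicting $F\notin I$.
\end{enumerate}

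For (b), no injectivity hypothesis is needed.
\begin{enumerate}
\item Regard $A',B',C',D'$ as elements of $\F[x,y,z]$. Put $A=A'$, $B=B'$, $C=(-1)^{d_3}C'$ and $D=(-1)^{a_3}D'$, so that $\widetilde{z^{d_3}}$ and $\widetilde{z^{a_3}}$ pick up the matching signs.
\item Then $G:=Ax^{d_1}+By^{d_2}+Cz^{d_3}+Dm$ satisfies $\tilde G=0$ by the given two-variable relation. So $G=FL$ for some homogeneous $F$ of degree $d-1$, which is a relation of the form \eqref{wlp relation}.
\item It remains to show $F\notin I$. Suppose instead $F=A''x^{d_1}+B''y^{d_2}+C''z^{d_3}+D''m$.
\item Subtracting $L$ times this expression from \eqref{wlp relation} gives $(D-LD'')m\in J$. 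By the first fact, $D-LD''\in J:m=K$.
\item Applying the substitution yields $\pm D'=\tilde D\in\widetilde K$, contradicting the hypothesis on $D'$.
\end{enumerate}

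The one step needing care is the bookkeeping of signs and degrees: $\widetilde{z^{e}}=(-1)^e(x+y)^e$, and each lift and each factor of $L$ must be homogeneous of the correct degree so that injectivity in degree $d-1$ applies. The colon computation $J:m=K$ is immediate for monomial ideals, since $0<a_i<d_i$ for each $i$.
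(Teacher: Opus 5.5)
Your proof is correct. Part (a) is the paper's argument, with the step $\tilde D\in\widetilde K\Rightarrow D\in K+(L)$ made explicit by lifting.

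Part (b) takes a different route to show $F\notin I$.

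\textbf{The paper's route.} The paper writes $F$ explicitly as
\[
F=C'\cdot\frac{(x+y)^{d_3}-(-z)^{d_3}}{L}+D'x^{a_1}y^{a_2}\cdot\frac{(x+y)^{a_3}-(-z)^{a_3}}{L}.
\]
Since $\F[x,y,z]$ is a domain, this is exactly $-1$ times your $G/L$. Using that $I$ is a monomial ideal, the paper then reads off the coefficients of $z^{d_3-1}$ and $z^{a_3-1}$ in $F$. This gives
\[
C'\in(x^{d_1},y^{d_2},x^{a_1}y^{a_2})
\quad\text{and}\quad
C'(x+y)^{d_3-a_3}+D'x^{a_1}y^{a_2}\in(x^{d_1},y^{d_2}).
\]
A two-variable manipulation and a colon by $x^{a_1}y^{a_2}$ then yield $D'\in\widetilde K$.

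\textbf{Your route.} You obtain $F$ implicitly from the kernel of the substitution being $(L)$, and then run the argument of (a) backwards. If $F\in I$, then $(D-LD'')m\in J$, so $D-LD''\in J:m=K$. Pushing forward gives $\pm D'\in\widetilde K$.

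\textbf{Comparison.} Your argument is shorter and avoids coefficient bookkeeping. It also makes (a) and (b) visibly dual through the single identity $J:m=K$. The paper's argument buys an explicit formula for $F$, and with it the by-product
\[
F\in I\ \Longrightarrow\ C'\in(x^{d_1},y^{d_2},x^{a_1}y^{a_2}),
\]
which the paper reuses in the proof of Lemma \ref{lem2}(b). Your argument does not produce that fact. It remains available, though, since your $F$ agrees with the paper's up to sign.
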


\begin{proof}
For (a), we argue by contrapositive. We have
\begin{equation*}
    \tilde{D}\in (x^{d_1-a_1}, y^{d_2-a_2} , (x+y)^{d_3-a_3})\quad \iff\quad D\in (x^{d_1-a_1}, y^{d_2-a_2}, z^{d_3-a_3}, L).
\end{equation*}
Say a homogeneous form $G$ is such that $D-GL\in (x^{d_1-a_1}, y^{d_2-a_2}, z^{d_3-a_3})$. Then 
\begin{equation*}
    (F-Gx^{a_1}y^{a_2}z^{a_3})L\in (x^{d_1}, y^{d_2}, z^{d_3}).
\end{equation*}
The assumption that $\times L$ is injective implies $F-Gx^{a_1}y^{a_2}z^{a_3} \in (x^{d_1}, y^{d_2}, z^{d_3})$, whence we obtain $F\in I$.

Part (b) proceeds by contrapositive as well. Let
\begin{equation*}
    F:=C'\cdot \frac{(x+y)^{d_3} - (-z)^{d_3}}{x+y+z} + D'x^{a_{1}}y^{a_{2}} \cdot \frac{(x+y)^{a_3}-(-z)^{a_3}}{x+y+z}.
\end{equation*}
It is clear that $FL\in I$ and $\deg(F)=d-1$. Further, we can rewrite
\begin{align*}
    F = &\ C'\big((x+y)^{d_3-1}+(-z)(x+y)^{d_3-2} + \cdots + (-z)^{d_3-1}\big)\, + \\
    &\  D'x^{a_1}y^{a_2}\big((x+y)^{a_3-1}+(-z)(x+y)^{a_3-2} + \cdots + (-z)^{a_3-1}\big).
\end{align*}
Assume $F\in I$. Looking at the terms of $F$ containing $z^{d_3-1}$, we see that $F\in I$ implies $C'\in (x^{d_1}, y^{d_2}, x^{a_1}y^{a_2})$. Next, look at the terms of $F$ containing $z^{a_3-1}$; we see that $F\in I$ implies $C'(x+y)^{d_3-a_3} + D'x^{a_1}y^{a_2}\in (x^{d_1}, y^{d_2})$. Thus, we obtain
\begin{equation*}
    D'x^{a_1}y^{a_2}\in (x^{d_1}, y^{d_2})+(x+y)^{d_3-a_3}(x^{d_1}, y^{d_2}, x^{a_1}y^{a_2})\quad \text{and note that}
\end{equation*}
\begin{equation*}
    (x^{d_1}, y^{d_2})+(x+y)^{d_3-a_3}(x^{d_1}, y^{d_2}, x^{a_1}y^{a_2})=(x^{d_1}, y^{d_2}, (x+y)^{d_3-a_3}x^{a_1}y^{a_2}).
\end{equation*}
This implies that there exists $H\in \F[x, y]$ such that
\begin{equation*}
    (D'-(x+y)^{d_3-a_3}H)x^{a_1}y^{a_2}\in (x^{d_1}, y^{d_2})
\end{equation*}
and thence $D'\in (x^{d_1-a_1}, y^{d_2-a_2}, (x+y)^{d_3-a_3})$.
\end{proof}

\begin{lem}\label{lem2}
Let $d_i=t+a_i$ for $1\le i \le 3$ and $d=t+\frac{2(a_1+a_2+a_3)}{3}-1$. Assume that $3$ divides $a_1+a_2+a_3$ and $a_3\leq 2(a_1+a_2)$.

Let $I=(x^{d_1}, y^{d_2}, z^{d_3}, x^{a_1}y^{a_2}z^{a_3})$. 

a. Assume that $FL=Ax^{d_1}+By^{d_2}+Cz^{d_3}+Dx^{a_1}y^{a_2}z^{a_3}$ is a homogeneous relation of degree $d$ with $F\notin I$. Then $\tilde{C}\ne 0$.

b. Assume there is a homogeneous relation of degree $d$ $A'x^{d_1}+B'y^{d_2}+C'(x+y)^{d_3}+D'x^{a_1}y^{a_2}(x+y)^{a_3}=0$  with $A', B', C', D'\in \F[x, y]$ and $C'\ne 0$. Then there is a homogeneous relation of degree $d$ as in part a. with $F\notin I$.
\end{lem}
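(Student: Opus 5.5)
The plan is to reduce both parts to \autoref{lem1} by counting degrees. The key observation is that in the degree $d=t+\frac{2(a_1+a_2+a_3)}{3}-1$ of the relation, the coefficient $D$ has very small degree. Throughout, write $s=\frac{a_1+a_2+a_3}{3}$, so $d=t+2s-1$ and $d_i-a_i=t$ for every $i$. I will assume $t\ge s$, the standing range of \autoref{polynomials}; the statement does not impose this, but the argument below needs it.

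Two preliminary remarks. First, the hypothesis $a_3\le 2(a_1+a_2)$ is equivalent to $a_3\le 2s$. Second, a homogeneous relation of degree $d$ forces
\[
\deg D=d-(a_1+a_2+a_3)=t-s-1<t .
\]
The ideal $(x^{t},y^{t},(x+y)^{t})\subseteq\F[x,y]$ has no nonzero elements in degree $<t$. Hence, for forms of degree $t-s-1$, not lying in $(x^{d_1-a_1},y^{d_2-a_2},(x+y)^{d_3-a_3})$ is the same as being nonzero. So both conditions on $\tilde D$ or $D'$ in \autoref{lem1} become simply ``$\neq 0$''.

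\textbf{Part a.} I first check that $\times L$ from degree $d-1$ to degree $d$ is injective on $\F[x,y,z]/(x^{d_1},y^{d_2},z^{d_3})$. This ring is a monomial complete intersection, so it has WLP in characteristic zero. Its socle degree is $3t+3s-3$. The condition $a_3\le 2s$ gives $d_3\le d_1+d_2$, because $a_3-a_1-a_2=2a_3-3s\le s\le t$. Therefore the Hilbert function is symmetric and unimodal, and $\times L$ is injective in degree $d-1\to d$ as soon as $(d-1)+d\le 3t+3s-3$, i.e.\ $t\ge s$. By \autoref{lem1}(a) we then get $\tilde D\notin(x^{t},y^{t},(x+y)^{t})$, so $\tilde D\neq 0$.

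Now suppose, for contradiction, that $\tilde C=0$. Substituting $z=-(x+y)$ gives $\tilde D\,x^{a_1}y^{a_2}(x+y)^{a_3}\in(x^{d_1},y^{d_2})$. Since this ideal is monomial, we can strip off $x^{a_1}y^{a_2}$ to get $\tilde D(x+y)^{a_3}\in(x^{t},y^{t})$. By \autoref{colon generator degrees} with $k=0$, every nonzero element of $(x^t,y^t):(x+y)^{a_3}$ has degree at least $t-\lceil a_3/2\rceil\ge t-s$. (If $a_3\ge 2t-1$ this needs a separate but trivial check, since then $a_3=2s=2t$ and the colon is the unit ideal.) But $\deg\tilde D=t-s-1<t-s$, a contradiction. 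Hence $\tilde C\ne0$.

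\textbf{Part b.} By \autoref{lem1}(b) it suffices to show $D'\ne0$. Suppose $D'=0$. Then $C'(x+y)^{d_3}\in(x^{d_1},y^{d_2})$ with $C'\neq0$ of degree $e=d-d_3=2s-1-a_3$. I would contradict this using \autoref{injectivity failure}, applied to $R=\F[x,y]/(x^{d_1},y^{d_2})$ with $k=a_2-a_1$ and exponent $d_3$ (here $d_3\ge k+1$, and $d_3\le d_1+d_2-2$ in the relevant range). That lemma says multiplication by $(x+y)^{d_3}$ is injective in all degrees below
\[
d_2-\left\lceil\tfrac{d_3+k}{2}\right\rceil .
\]
The inequality
\[
2s-1-a_3< t+a_2-\left\lceil\tfrac{t+a_3+a_2-a_1}{2}\right\rceil
\]
reduces, after clearing the ceiling, to $t>s-1$. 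This holds since $t\ge s$, so $C'$ would have to lie in $(x^{d_1},y^{d_2})$. In degree $e<d_1$ that forces $C'=0$, a contradiction.

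The main obstacle I expect is the bookkeeping of edge cases rather than any conceptual step. Specifically, one must check that the hypotheses of \autoref{injectivity failure} and \autoref{colon generator degrees} hold: $1\le a\le d_1+d_2-2$, the case $d_3\ge d_1+d_2-1$ where the colon is the whole ring, and the parity and ceiling in the degree inequalities. One must also confirm that the condition $t\ge s$ is what is needed in the injectivity step of part a.
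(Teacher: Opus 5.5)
Your proof is correct in the range $t\ge s$ that you explicitly assume. That assumption is not a weakness relative to the paper: the paper's proof of part a also invokes \autoref{lem1}(a) without checking its injectivity hypothesis, and your WLP computation is exactly what supplies it.

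\textbf{Part a.} Apart from that check, this is the paper's argument run in contrapositive order. Both reduce $\tilde C=0$ to $\tilde D(x+y)^{a_3}\in(x^t,y^t)$ and kill $\tilde D$ with the generator-degree bound $t-\lceil a_3/2\rceil>t-s-1$. There is one small slip: when $a_3\ge 2t-1$ one can also have $a_3=2t-1$, not only $a_3=2t$. Either way $t=s$, so $\deg\tilde D<0$ and the contradiction is immediate.

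\textbf{Part b.} Here you genuinely diverge from the paper.
\begin{itemize}
\item \emph{The paper's route.} It never passes through $D'$. It takes the same $F$ as in the proof of \autoref{lem1}(b) and looks only at the coefficient of $z^{d_3-1}$. Thus $F\in I$ would force $C'\in(x^{d_1},y^{d_2},x^{a_1}y^{a_2})$. This is impossible for a nonzero form of degree $\frac{2(a_1+a_2)-a_3}{3}-1<\min\{d_1,d_2,a_1+a_2\}$. It is a one-line degree count that uses $C'\neq 0$ directly.
\item \emph{Your route.} You use the full strength of \autoref{lem1}(b), which obliges you to prove $D'\neq 0$. You get this from injectivity of $\times(x+y)^{d_3}$ on $\F[x,y]/(x^{d_1},y^{d_2})$ in degree $2s-1-a_3$, via \autoref{injectivity failure}, i.e.\ from the SLP in two variables. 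This requires checking $k+1\le d_3\le d_1+d_2-2$, which does hold once $t\ge s$ and $C'\neq 0$. Your reduction of the needed inequality to $t\ge s$ is right.
\item \emph{Sharpness.} That bound is sharp for your route. For $t<s$ one has $\deg D'=t-s-1<0$, so $D'=0$ automatically and \autoref{lem1}(b) cannot be used. The paper's degree count still works whenever $\deg C'<\min\{d_1,d_2,a_1+a_2\}$, which can hold for some $t<s$.
\end{itemize}
So the paper's route for part b is more elementary and more robust. Yours is valid, but it spends a nontrivial lemma where a single coefficient comparison suffices.
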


\begin{proof}

a. Assume by way of contradiction that $\tilde{C}=0$. This means that $C$ is a multiple of $L$, say $C=KL$. Rewrite equation \eqref{wlp relation} as 
$$
(F-Kz^{d_3})L= Ax^{d_1}+By^{d_2}+Dx^{a_1}y^{a_2}z^{a_3}$$
which implies $\tilde{D}x^{a_1}y^{a_2}(x+y)^{a_3}\in (x^{d_1}, y^{d_2})$, i.e. $\tilde{D}\in (x^{d_1-a_1}, y^{d_2-a_2}):(x+y)^{a_3}=(x^t, y^t):(x+y)^{a_3}$.

On the other hand, we have $\mathrm{deg}(\tilde{D})=t-\frac{a_1+a_2+a_3+3}{3}$, and the smallest degree of a generator of $(x^t, y^t):(x+y)^{a_3}$ is $t-\left\lceil \frac{a_3}{2}\right\rceil $. The assumption that $2(a_1+a_2)\geq a_3$ implies that $t-\left\lceil \frac{a_3}{2}\right\rceil > t-\frac{a_1+a_2+a_3+3}{3}$, and therefore $\tilde{D}=0$. \autoref{lem1} now implies $F\in I$, which is a contradiction.

\bigskip

b. Note first that if $a_{3}=2(a_{1}+a_{2})$ then we have $\deg(C')=-1$, so no such relation with $C'\neq 0$ exists and the conclusion is vacuously true. We therefore assume $a_{3}<2(a_{1}+a_{2})$. Define
$$
F:=C'\cdot \frac{(x+y)^{d_3} - (-z)^{d_3}}{x+y+z} + D'x^{a_{1}}y^{a_{2}}\cdot \frac{(x+y)^{a_3}-(-z)^{a_3}}{x+y+z}
$$ as in the proof of \autoref{lem1}.

We saw in the proof of \autoref{lem1} that $F\in I$ implies $C'\in (x^{d_1}, y^{d_2}, x^{a_1}y^{a_2})$. 
However, $\mathrm{deg}(C')=d-d_3=\frac{2(a_1+a_2)-a_3}{3}-1
<\mathrm{min}\{d_1, d_2, a_1+a_2\}$, so we must have $C'=0$.    
\end{proof}

\begin{cor}\label{a=0}
Retain the setup assumptions from \autoref{lem2}. If $a_3=2(a_1+a_2)$, then WLP holds for all $t$.
\end{cor}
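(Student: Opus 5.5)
The plan is to reduce failure of WLP to the existence of a single bad relation in the degree $d=t+\frac{2(a_1+a_2+a_3)}{3}-1$ of \autoref{lem2}, and then show that no such relation can exist because the coefficient $C$ would have negative degree.

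First I split on $t$. For $t<\frac{a_1+a_2+a_3}{3}$, WLP is already known: this is part (a) of Conjecture 6.8 in \cite{Mig-Mir-Nag-11}, proved via Lemma 6.6, Corollary 6.3 and Lemma 6.7 there, as recalled in the introduction. So I assume $t\ge\frac{a_1+a_2+a_3}{3}$.

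In this range I will use the standard reduction for level monomial almost complete intersections from \cite{Mig-Mir-Nag-11} (and \cite{Cook-Nagel-21}). It says that $A=\F[x,y,z]/I$ has WLP if and only if $\times L:A_{d-1}\to A_d$ is injective, where $d$ is the degree of \autoref{lem2}; since $A$ is level, one may take $L=x+y+z$. Failure of WLP therefore means there is $F\notin I$ of degree $d-1$ with $FL\in I$, that is, a homogeneous relation
\[
FL=Ax^{d_1}+By^{d_2}+Cz^{d_3}+Dx^{a_1}y^{a_2}z^{a_3}
\]
of degree $d$ with $F\notin I$.

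Next I check the hypothesis needed when \autoref{lem2}(a) calls on \autoref{lem1}(a): the map $\times L$ must be injective on the monomial complete intersection $\F[x,y,z]/(x^{d_1},y^{d_2},z^{d_3})$ from degree $d-1$ to degree $d$. This algebra has WLP in characteristic zero by \cite{Stanley-80}, and its Hilbert function is symmetric about half its socle degree $3t+a_1+a_2+a_3-3$. Hence $\times L$ is injective from degree $d-1$ as soon as $(d-1)+d\le 3t+a_1+a_2+a_3-3$. This inequality reduces to $t\ge\frac{a_1+a_2+a_3}{3}$, which is exactly our standing assumption.

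With that hypothesis in place, \autoref{lem2}(a) applies to the relation above and gives $\tilde C\neq 0$. On the other hand,
\[
\deg C=d-d_3=\frac{2(a_1+a_2)-a_3}{3}-1=-1
\]
when $a_3=2(a_1+a_2)$, so $C=0$ and hence $\tilde C=0$. This contradiction shows no such $F$ exists, so WLP holds.

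The main obstacle is the reduction to the single degree $d$ together with the choice $L=x+y+z$. I expect to cite it from \cite{Mig-Mir-Nag-11}, combining levelness with the fact that $\dim A_{d-1}\le\dim A_d$ in this range. The rest is the degree count above.
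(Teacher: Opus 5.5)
Your proposal is correct and is essentially the paper's argument: reduce failure of WLP to a degree-$d$ relation with $F\notin I$, get $\tilde C\neq 0$ from \autoref{lem2}(a), and contradict this with $\deg C=-1$. Your explicit check that $\times L$ is injective on $\F[x,y,z]/(x^{d_1},y^{d_2},z^{d_3})$ from degree $d-1$ when $t\ge\frac{a_1+a_2+a_3}{3}$, with smaller $t$ handled by citation, makes explicit a hypothesis of \autoref{lem1}(a) that the paper's proof leaves unstated.

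Two small corrections to how you justify the cited reduction. First, $L=x+y+z$ may be used because $I$ is monomial, not because $A$ is level. Second, levelness together with $\dim A_{d-1}\le\dim A_d$ only propagates injectivity downward and says nothing about surjectivity in higher degrees. You therefore need either the equality $\dim A_{d-1}=\dim A_d$ from Lemma 7.1 of \cite{Mig-Mir-Nag-11}, or Theorem 4.10(b)(1) of \cite{Cook-Nagel-21} to rule out failure in other degrees; the paper cites both.
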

\begin{proof}
Part (b)(1) of Theorem 4.10 in \cite{Cook-Nagel-21} gives the only degrees in which $\times L$ can fail to have maximal rank, and Lemma 7.1 in \cite{Mig-Mir-Nag-11} says the Hilbert function has the same value in both of these degrees. Therefore, WLP fails if and only if there exists a homogeneous relation
\begin{equation*}
FL=Ax^{t+a_{1}}+By^{t+a_{2}}+Cz^{t+a_{3}}+Dx^{a_1}y^{a_2}z^{a_3}
\end{equation*}
of degree $d:=t+\frac{2(a_1+a_2+a_3)}{3}-1$ with $F\notin I$.
Let $a:=\frac{2(a_1+a_2)-a_3}{3}$, so $\mathrm{deg}(C)=a-1$.
In this case $a=0$ so $C$ must be 0. This is a contradiction due to part a. of \autoref{lem2}.
\end{proof}

We are now ready to prove \autoref{polynomials}.

\begin{proof} 

As mentioned in the proof of the preceding corollary, we know that WLP fails if and only if there exists a homogeneous relation 
\begin{equation*}
FL=Ax^{t+a_{1}}+By^{t+a_{2}}+Cz^{t+a_{3}}+Dx^{a_1}y^{a_2}z^{a_3}
\end{equation*}
of degree $d:=t+\frac{2(a_1+a_2+a_3)}{3}-1$ with $F\notin I$.
Let $a:=\frac{2(a_1+a_2)-a_3}{3}$, so $\mathrm{deg}(C)=a-1$.

Assuming WLP fails,
apply the homomorphism that sends $z$ to $-(x+y)$ to \eqref{wlp relation} to obtain
\begin{equation}\label{tild}
\tilde{A}x^{d_1}+\tilde{B}y^{d_2}+\tilde{C}(x+y)^{d_3}+\tilde{D}x^{a_1}y^{a_2}(x+y)^{a_3}=0,
\end{equation}
and therefore
$$
\tilde{C}(x+y)^t \in (x^{d_1}, y^{d_2}):(x+y)^{a_3} + (x^{a_1}y^{a_2}),
$$
Let $F_1, F_2$ denote the generators of $(x^{d_1}, y^{d_2}):(x+y)^{a_3}$. We can write 
\begin{equation}\label{sys}
\tilde{C}(x+y)^t = H_1F_1 + H_2F_2 \ \ \ \mathrm{mod}\, (x^{a_1}y^{a_2})
\end{equation}

We know that 
$$\displaystyle \mathrm{deg}(F_1)= \left\lfloor \frac{d_1+d_2-a_3}{2}\right\rfloor, \ \ \ \displaystyle \mathrm{deg}(F_2)=\left\lceil \frac{d_1+d_2-a_3}{2}\right\rceil ,$$ and therefore
$$
\mathrm{deg}(H_1)=t+a-1-\left\lfloor \frac{d_1+d_2-a_3}{2}\right\rfloor =a-1-\left\lfloor\frac{a_1+a_2-a_3}{2} \right\rfloor = \left\lceil \frac{a_1+a_2+a_3}{6} \right\rceil -1
$$
and 
$$
\mathrm{deg}(H_2)=\left\lfloor \frac{a_1+a_2+a_3}{6}\right\rfloor-1.
$$
We view the coefficients of $\tilde{C}, H_1, H_2$ as unknowns. There are a total of $a+ \frac{a_1+a_2+a_3}{3}=a_1+a_2$ unknowns. 
Equation (\ref{sys}) translates into a system of linear equations in these unknowns: for each monomial of degree $a+t-1$ not divisible by $x^{a_1}y^{a_2}$ we get an equation by setting the coefficient of that monomial on the left hand side equal to the coefficient of that monomial on the right hand side. More precisely, we get one equation for each of the following monomials: $x^{t+a-1}, x^{t+a-2}y, \ldots, x^{ t+a-a_2}y^{a_2-1}, x^{a_1-1}y^{t+a-a_1}, \ldots, xy^{t+a-2}, y^{t+a-1}$.

The assumption $\displaystyle t>\frac{a_1+a_2+a_3}{3}-1$ is equivalent to  $t+a-a_2+1>a_1-1$,  so that the monomials listed above are all distinct, and we have a total of $a_1+a_2$ equations. Therefore we have a linear system of $a_1+a_2$ equations with $a_1+a_2$ unknowns. 

We claim that the system given by (\ref{sys}) has a nontrivial solution if and only if there exists $\tilde{C}\ne 0$ that satisfies $(\ref{sys})$. Indeed, if there is a nontrivial solution with $\tilde{C}=0$ then we have $H_1F_1+H_2F_2\in (x^{a_1}y^{a_2})$. This would mean that there exists a nonzero polynomial $K$ of degree $t+a-1-(a_1+a_2)=t-\frac{a_1+a_2+a_3+3}{3}$
such that $Kx^{a_1}y^{a_2}\in (F_1, F_2)=(x^{d_1}, y^{d_2}):(x+y)^{a_3}$, and therefore $K\in (x^t, y^t):(x+y)^{a_3}$.
However, the smallest degree of a generator of $(x^t, y^t):(x+y)^{a_3}$ is $t-\lceil \frac{a_3}{2}\rceil $. The assumption that $2(a_1+a_2)>a_3$ implies that $t-\lceil \frac{a_3}{2}\rceil > t-\frac{a_1+a_2+a_3+3}{3}$. This is a contradiction.

According to \autoref{lem2}, we now have that failure of WLP is equivalent to existence of a nontrivial solution for the linear system of equations given by (\ref{sys}), which is equivalent to the vanishing of the determinant of an $(a_1+a_2)\times (a_1+a_2)$ matrix. Note that the entries in this matrix come from the relevant coefficients in $F_1$ and $F_2$, and are polynomials in $t$. We get two different polynomials according to the parity of $t$ because the coefficients of $F_1$ and $F_2$ involve $(-1)^t$.
\end{proof}

\begin{exam}\label{example}
\normalfont We offer a concrete illustration of \autoref{polynomials}. Let $(a_{1},a_{2},a_{3})=(3,7,14)$. When $t=9$, the comment after Question 7.12 in \cite{Mig-Mir-Nag-11} (or calculating the annihilator of $L=x+y+z$ from degree $23$ to $24$ in \texttt{Macaulay2}) tells us the associated ideal fails WLP; we will see this failure manifest in the determinant. We have $a=2$ here and so note the following degrees: $\deg(C)=1$, $\deg(H_{1})=3$, and $\deg(H_{2})=3$. Further, $k=4$ in this setting, whence $k+1\leq a_{3}$ and $a_{3}-k=10$ is even. By \autoref{thrm:colon generators in general}, the generators of $(x^{t+3},y^{t+7}):(x+y)^{14}$ are
\begin{align*}
    G_{1,t+7,10,4} & =y\sum_{i=0}^{t+1}(-1)^{i}\binom{t+5-i}{8}\binom{5+i}{5}x^{t-3-i}y^{i}\qquad\text{and} \\
    G_{2,t+7,10,4} & =\sum_{i=0}^{t+2}(-1)^{i}\binom{t+6-i}{8}\binom{4+i}{4}(i-1)x^{t-2-i}y^{i}.
\end{align*}
Our relation in this context reads as $H_{1}G_{1,t+7,10,4}+H_{2}G_{2,t+7,10,4}=\widetilde{C}(x+y)^{t}$. Modding by $x^{3}y^{7}$ and taking the relevant degrees into consideration, we obtain a $10\times 10$ matrix. For ease of readability, we write this matrix in three parts: $A_{1}$ encodes the coefficients from $H_{1}G_{1,t+7,10,4}$, $A_{2}$ the coefficients from $H_{2}G_{2,t+7,10,4}$, and $A_{3}$ the coefficients from $\widetilde{C}(x+y)^{t}$.
\begin{equation*}
    A_{1}=\begin{bmatrix}[1.5]
        0 & 0 & 0 & 0 \\
        \binom{t+5}{8} & 0 & 0 & 0 \\
        -\binom{t+4}{8}\binom{6}{5} & \binom{t+5}{8} & 0 & 0 \\
        \binom{t+3}{8}\binom{7}{5} & -\binom{t+4}{8}\binom{6}{5} & \binom{t+5}{8} & 0 \\
        -\binom{t+2}{8}\binom{8}{5} & \binom{t+3}{8}\binom{7}{5} & -\binom{t+4}{8}\binom{6}{5} & \binom{t+5}{8} \\
        \binom{t+1}{8}\binom{9}{5} & -\binom{t+2}{8}\binom{8}{5} & \binom{t+3}{8}\binom{7}{5} & -\binom{t+4}{8}\binom{6}{5} \\
        -\binom{t}{8}\binom{10}{5} & \binom{t+1}{8}\binom{9}{5} & -\binom{t+2}{8}\binom{8}{5} & \binom{t+3}{8}\binom{7}{5} \\
        0 & (-1)^{t-3}\binom{t+2}{5} & (-1)^{t-4}\binom{9}{8}\binom{t+1}{5} & (-1)^{t-5}\binom{10}{8}\binom{t}{5} \\
        0 & 0 & (-1)^{t-3}\binom{t+2}{5} & (-1)^{t-4}\binom{9}{8}\binom{t+1}{5} \\
        0 & 0 & 0 & (-1)^{t-3}\binom{t+2}{5}
    \end{bmatrix}
\end{equation*}
\begin{equation*}
    A_{2}=\begin{bmatrix}[1.5]
        -\binom{t+6}{8} & 0 & 0 & 0 \\
        0 & -\binom{t+6}{8} & 0 & 0 \\
        \binom{t+4}{8}\binom{6}{4} & 0 & -\binom{t+6}{8} & 0 \\
        -2\binom{t+3}{8}\binom{7}{4} & \binom{t+4}{8}\binom{6}{4} & 0 & -\binom{t+6}{8} \\
        3\binom{t+2}{8}\binom{8}{4} & -2\binom{t+3}{8}\binom{7}{4} & \binom{t+4}{8}\binom{6}{4} & 0 \\
        -4\binom{t+1}{8}\binom{9}{4} & 3\binom{t+2}{8}\binom{8}{4} & -2\binom{t+3}{8}\binom{7}{4} & \binom{t+4}{8}\binom{6}{4} \\
        5\binom{t}{8}\binom{10}{4} & -4\binom{t+1}{8}\binom{9}{4} & 3\binom{t+2}{8}\binom{8}{4} & -2\binom{t+3}{8}\binom{7}{4} \\
        0 & (-1)^{t-2}(t-3)\binom{t+2}{4} & (-1)^{t-3}(t-4)\binom{9}{8}\binom{t+1}{4} & (-1)^{t-4}(t-5)\binom{10}{8}\binom{t}{4} \\
        0 & 0 & (-1)^{t-2}(t-3)\binom{t+2}{4} & (-1)^{t-3}(t-4)\binom{9}{8}\binom{t+1}{4} \\
        0 & 0 & 0 & (-1)^{t-2}(t-3)\binom{t+2}{4}
    \end{bmatrix}
\end{equation*}
\vspace{5pt}
\begin{equation*}
    A_{3}=\begin{bmatrix}[1.5]
        1 & 0 \\
        \binom{t}{1} & 1 \\
        \binom{t}{2} & \binom{t}{1} \\
        \binom{t}{3} & \binom{t}{2} \\
        \binom{t}{4} & \binom{t}{3} \\
        \binom{t}{5} & \binom{t}{4} \\
        \binom{t}{6} & \binom{t}{5} \\
        \binom{t}{t-1} & \binom{t}{t-2} \\
        1 & \binom{t}{t-1} \\
        0 & 1
    \end{bmatrix}
\end{equation*}
Let $A$ be the matrix whose $10$ columns are formed from $A_{1}$, $A_{2}$, and $A_{3}$. Using \texttt{Macaulay2}, we find (up to multiplication by a scalar) its determinant when $t$ is odd:
\small
\begin{align*}
    \det(A)= &\ t^{8}(t-9)(t-7)(t-5)^{3}(t-4)^{2}(t-3)^{5}(t-2)^{4}(t-1)^{10}(t+1)^{7}(t+2)^{4}(t+3)^{2}(t+4) \\
    &\, (t+8)^{2}(t+9)(t+10)^{4}(t+11)^{3}(t+12)^{2}(t+14)^{2}(7t^{3}+115t^{2}+504t+36).
\end{align*}
\normalsize
Note the presence of $t=9$ as a root of this polynomial! (The other positive roots are all less than $\frac{a_{1}+a_{2}+a_{3}}{3}=8$ and so fall outside our theorem's purview.) Moreover, no positive integer root is larger than $9$, which means WLP holds for $I=(x^{3+t},y^{7+t},z^{14+t},x^{3}y^{7}z^{14})$ for all odd values of $t\geq 11$.
\end{exam}

In a different direction, we now focus on proving \autoref{cook-nagel} in some cases that are close to borderline in the sense that the values of $a_1, a_2, a_3, t$ are such that the inequalities in the assumptions of \autoref{cook-nagel} are satisfied, but are close to equalities.
These assumptions are: $0<a_1\le a_2 \le a_3 \le 2(a_1+a_2)$ and $t\ge \frac{a_1+a_2+a_3}{3}$. We note that the conjecture is known for $t=\frac{a_1+a_2+a_3}{3}$; since $t$ and $a_1+a_2+a_3$ have the same parity in this instance, WLP holds per part (i) of Proposition 4.14 in \cite{Cook-Nagel-21}. Hence, the smallest value of $t$ where the conjecture is not known is $t=\frac{a_1+a_2+a_3}{3}+1$. According to \autoref{polynomials}, proving that WLP holds for this value of $t$ will imply that the conjecture can fail for at most finitely many values of $t$ when the values of $a_1, a_2, a_3$ are fixed. 

Due to the assumption that $a_1+a_2+a_3$ is a multiple of 3, we can write $a_3=2(a_1+a_2)-3a$ where $a$ is a nonnegative integer. The next result shows that \autoref{cook-nagel} holds for the value $t=\frac{a_1+a_2+a_3}{3}+1$ when $a\le 3$.

\begin{thrm}\label{borderline}
Consider the ideal $I=(x^{a_1+t}, y^{a_2+t}, z^{a_3+t}, x^{a_1}y^{a_2}z^{a_3})\subseteq\F[x, y, z]$ where $a_1, a_2, a_3, t$ satisfy the three conditions stated in equation \eqref{conditions}, and write $a_3=2(a_1+a_2)-3a$ where $a$ is a nonnegative integer. Assume $t=\frac{a_1+a_2+a_3}{3}+1$ and $a\leq 3$. If $(a_1, a_2, a_3, t)$ is not $(2, 9, 13, 9)$ or $(3, 7, 14, 9)$, then $\F[x, y, z]/I$ has WLP except when $t$ is even, $a_1+a_2+a_3$ is odd, and $a_{1}$, $a_{2}$, $a_{3}$ are not all distinct.
\end{thrm}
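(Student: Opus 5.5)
Here $a_1+a_2+a_3 = 3(a_1+a_2)-3a$, so $\frac{a_1+a_2+a_3}{3}=a_1+a_2-a$ and $t=a_1+a_2-a+1$. The case $a=0$ is \autoref{a=0}, so assume $1\le a\le 3$. By the proof of \autoref{polynomials} together with \autoref{lem2}, WLP fails exactly when there is a nonzero $\tilde C$ of degree $a-1$ with
\[
\tilde C\,(x+y)^t\in (x^{d_1},y^{d_2}):(x+y)^{a_3}+(x^{a_1}y^{a_2}) .
\]
Equivalently, the square linear system \eqref{sys} must have a nontrivial solution. For this borderline $t$ the degrees are very small:
\[
\deg H_1=\left\lceil \tfrac{a_1+a_2-a}{2}\right\rceil-1,\qquad \deg H_2=\left\lfloor \tfrac{a_1+a_2-a}{2}\right\rfloor-1 .
\]
Also $\deg\tilde C\le 2$, and the equations come from monomials $x^{t+a-1-j}y^j$ with $j<a_2$ or $j>t+a-1-a_1$. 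So I would not expand the determinant. Instead I would test membership directly, using the explicit generators from \autoref{thrm:colon generators in general} with $d_1=t+a_1$, $d_2=t+a_2$, $k=a_2-a_1$, and exponent $a_3$. Since $a_3\ge a_2\ge k+1$ we are in part (2). The parity of $a_3-k=a_1+a_1+a_2-3a-\dots$, i.e. of $a_3-k\equiv a_1+a-k$, then decides whether the $F$ or the $G$ generators appear.

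The first key step is to rewrite the condition as a statement about $(x^t,y^t)$ colons, as in the proof of \autoref{lem2}(a). Any element $H_1F_1+H_2F_2$ of degree $t+a-1$ has prescribed coefficients only in the $a_1+a_2$ ``boundary'' monomials listed in the proof of \autoref{polynomials}. For $j<a_2$ these are the coefficients of the low powers of $y$, and for the others they are the low powers of $x$. I would write $F_1,F_2$ through their leading coefficients near each end. From the formulas in \autoref{formulas}, the $x$-end coefficients are $(-1)^i\binom{d_2-1-i}{\cdot}\binom{\cdot+i}{\cdot}$. The $y$-end coefficients are obtained by substituting $i=\deg-j$, and this is where $(-1)^t$ enters. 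The goal is to reduce \eqref{sys} to a block system: the $a_2$ equations near the $x$-end and the $a_1$ equations near the $y$-end. Each block looks like a Hankel or Toeplitz-type system of binomial coefficients, coupled only through the at most three coefficients of $\tilde C$.

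The second step is a case analysis over $a\in\{1,2,3\}$ and the parities of $t$ and $a_1+a_2+a_3$, which are linked because $t=\frac{a_1+a_2+a_3}{3}+1$. In each case I would eliminate $H_1,H_2$ via the binomial blocks. The blocks are triangular with nonzero diagonal entries such as $\binom{d_2-1}{\cdot}$. This should leave a small ($a\times a$) system in the coefficients of $\tilde C$, whose entries are explicit alternating sums of products of binomials. I expect these sums to collapse by Vandermonde/Chu identities to closed forms whose determinant vanishes only when $(-1)^t=1$ and a symmetry such as $a_1=a_2$ or $a_2=a_3$ holds (the latter means $a_2=2a_1-3a+\dots$). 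This would recover the known failures of Corollary 7.4 in \cite{Mig-Mir-Nag-11} and exclude all others, except for sporadic solutions of a low-degree Diophantine condition. I would check these sporadic solutions directly against the two rogue tuples, noting that $(3,7,14,9)$ has $a=2$ and $(2,9,13,9)$ has $a=3$ with exactly this $t$.

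The main obstacle is evaluating the reduced $a\times a$ determinant in closed form, especially for $a=3$, where the $3\times3$ system involves sums that may not telescope neatly. If the general identity is elusive, my fallback is to use \autoref{in colon ideal with right degrees implies generation}-style degree arguments. These would show that when $a_1<a_2<a_3$, the boundary equations force $\tilde C$ into $(x,y)^{a}$, which is impossible in degree $a-1$. I would treat the equality cases separately via the known results.
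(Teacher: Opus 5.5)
There is a genuine gap: your proposal stops exactly where the proof has to happen. The setup up to the square system \eqref{sys} is correct; it is just the framework of \autoref{polynomials}. But the step that decides for which $(a_1,a_2,a)$ that system is singular is only conjectured. You ``expect'' the reduced sums to collapse by Vandermonde-type identities, and you yourself flag the $a\times a$ determinant as the main obstacle.

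The reduction you sketch also does not hold as described. Here $\deg H_1,\deg H_2\approx\frac{a_1+a_2-a}{2}$, which is comparable to both $a_1$ and $a_2$. So the coefficients of $H_1,H_2$ enter both the $a_2$ equations at the $x$-end and the $a_1$ equations at the $y$-end; the blocks are not coupled only through $\tilde C$. In the $(3,7,14,9)$ matrix of \autoref{example}, for instance, the $H_1$-columns have nonzero entries in both groups of rows. Eliminating $H_1,H_2$ therefore means taking a Schur complement of a matrix whose size grows with $a_1+a_2$ and whose binomial entries depend on $t=a_1+a_2-a+1$. Nothing in the proposal indicates how to evaluate that.

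Your fallback cannot work either. A degree argument showing that $a_1<a_2<a_3$ forces $\tilde C\in(x,y)^a$ would prove WLP for $(3,7,14,9)$ (where $a=2$) and $(2,9,13,9)$ (where $a=3$). These are precisely the rogue tuples that fail WLP. Any correct argument must end in a genuinely arithmetic condition on $(a_1,a_2)$ whose solutions include these tuples, as well as $(1,4,4)$ and $(1,7,7)$. (A minor slip: $a_3-k=3a_1+a_2-3a\equiv a_1+a_2+a \pmod 2$, not $a_1+a-k$.)

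The missing idea is to eliminate the unknowns rather than compute a determinant.

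\begin{itemize}
\item At this value of $t$ the relevant degree is $d=a_1+a_2+a_3$, so $D$ is a constant. It is nonzero, since otherwise $FL$ would lie in the monomial complete intersection at exactly half its socle degree, contradicting WLP there. Normalize $D=1$.
\item Apply $\Delta^a$ with $\Delta=\partial_x-\partial_y$. It annihilates $L$ and $z$, so the left side stays a multiple of $L$. It kills $C$ because $\deg C=a-1$. It lowers the exponents of $x^{t+a_1}$ and $y^{t+a_2}$ by at most $a$.
\item After $z\mapsto -(x+y)$ (with which $\Delta$ commutes), and for $a\le a_1$, you get $(\tau^aD)(x+y)^{a_3}\in(x^t,y^t)$. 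This form has degree $2t-2$, so the condition is the vanishing of the single number
\[
\sum_{j=0}^{a}(-1)^{a-j}\binom{a}{j}(a_1)_j\,(a_2)_{a-j}\binom{a_3}{t-1-j},\qquad (m)_j=m(m-1)\cdots(m-j+1),
\]
a polynomial in $a_1,a_2$ with no unknowns left.
\end{itemize}

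This is only a necessary condition for failure, but that is all the theorem requires, since the failures in the exceptional case are already known. The paper then solves it by rewriting in $S=a_1+a_2$, $P=a_1a_2$ and using divisibility: $2S-5\mid 15$ for $a=2$ and $2S-7\mid 45$ for $a=3$. The cases $a_1<a$ are handled with a modified $\tau^a$. The colon generators of \autoref{thrm:colon generators in general} are never needed.
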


\begin{proof}
The case $a=0$ is known from \autoref{a=0}.
Assume $a\ge 1$.

Recall that the pairing of Theorem 4.10 in \cite{Cook-Nagel-21} with Lemma 7.1 in \cite{Mig-Mir-Nag-11} implies that WLP fails if and only if there is a homogeneous relation of degree $d$
\begin{equation}\label{en}
FL=Ax^{t+a_1}+By^{t+a_2}+Cz^{t+a_3}+Dx^{a_1}y^{a_2}z^{a_3}
\end{equation}
with $F\notin I$, where $d=t+\frac{2(a_1+a_2+a_3)}{3}-1=a_1+a_2+a_3$.
We have $\mathrm{deg}(C)=d-(t+a_3)=a-1$ and $\mathrm{deg}(D)=d-(a_1+a_2+a_3)=0$, so $D$ is a constant. We note that $D\ne 0$, since otherwise we would have 
$FL\in (x^{t+a_1}, y^{t+a_2}, z^{t+a_3})$; since $d$ is exactly half of the socle degree of the complete intersection $(x^{t+a_1}, y^{t+a_2}, z^{t+a_3})$ and $\deg(F)=d-1$, this would contradict the WLP for monomial complete intersections. Therefore, we may take $D=1$.

Note that the assumption implies $a\le a_1$, for otherwise $a_3=2a_1+2a_2-3a<2a_1-a_2\le a_2$, a contradiction.

Define $\Delta$ to be the operator $\frac{\partial}{\partial x} -\frac{\partial}{\partial y}$ (applied to a polynomial) and $\Delta ^k$ means that this operator is applied repeatedly $k$ times. 

Furthermore, we define an operator $\tau ^k$ recursively as follows:
$
\tau^0 D:=D,
$
\small
$$ \tau ^k D:= \left \lbrace \begin{array}{lcc} xy \Delta (\tau ^{k-1} D)+((a_1-k+1)y-(a_2-k+1)x)(\tau ^{k-1} D) & \mathrm{if} & 1\le k \le a_1+1, \\ 
y\Delta(\tau^{k-1}D)-(a_2-k+1)\tau^{k-1} D & \mathrm{if} & a_1+1<k\le a_2+1.\\ \end{array} \right.
$$
\normalsize
Note that $\mathrm{deg}(\tau^kD)$ is $k+\mathrm{deg}(D)$ if $1\le k \le \mathrm{min}\{a_1+1, a_2+1\}$, and it is $a_1+\mathrm{deg}(D)$ if $a_1+1< k\le a_2+1$.
The definition of $\tau^kD$ is so that
$$
\Delta^k(x^{a_1}y^{a_2}z^{a_3}D)=\left\lbrace \begin{array}{lcc} x^{a_1-k}y^{a_2-k}z^{a_3}(\tau^kD) & \mathrm{if} & k\le a_1, \\ y^{a_2-k}z^{a_3}(\tau^kD) & \mathrm{if} & a_1<k \le a_2.\\ \end{array}\right.$$

Applying $\Delta$ in equation (\ref{en}) yields 
$$
(\Delta ^k F) L = A' x^{t+a_1-k} + B'y^{t+a_2-k} + (\Delta ^k C)z^{t+a_3} +(\tau^kD)x^{a_1-k}y^{a_2-k}z^{a_3}
$$
for all $1\le k\le a_2$, where $A', B'\in \F[x, y, z]$ and we use the convention that $x^{a_1-k}=1$ if $a_1<k$.

Since $\mathrm{deg}(C)=a-1$, we have $\Delta^aC=0$, and we get
\begin{equation}\label{tilde}
(\Delta ^a F) L = A' x^{t+a_1-a} + B'y^{t+a_2-a} +(\tau^aD)x^{a_1-a}y^{a_2-a}z^{a_3}.
\end{equation}

Apply the homomorphism that sends $z$ to $-(x+y)$. Recall that $\tilde{f}$ denotes the image of a polynomial $f\in \F[x, y, z]$ under this homomorphism.
Since $D=1$, $\tau^kD$ is a polynomial in $x, y$ and therefore it is unaffected by this homomorphism. Note that the left hand side of (\ref{tilde}) becomes zero. We have 
$$
(\tau ^aD) x^{a_1-a}y^{a_2-a}(x+y)^{a_3}\in (x^{t+a_1-a}, y^{t+a_2-a}) \ \mathrm{if} \ a_1\ge a,
$$
and 
$$
(\tau ^aD) y^{a_2-a}(x+y)^{a_3}\in (x^{t+a_1-a}, y^{t+a_2-a}) \ \mathrm{if} \ a_1<a.
$$
Equivalently,  
\begin{equation}\label{cond1}
(\tau ^a D) (x+y)^{a_3} \in (x^t, y^t) \ \mathrm{if} \ a_1\ge a, \mathrm{and} 
\end{equation}
\begin{equation}\label{cond2}
(\tau ^a D) (x+y)^{a_3} \in (x^{t+a_1-a}, y^t) \ \mathrm{if} \ a_1<a
\end{equation}
If $a\le a_1$, we have $\mathrm{deg}((\tau ^a D) (x+y)^{a_3})=a+a_3=2t-2$, so (\ref{cond1}) is equivalent to the coefficient of $x^{t-1}y^{t-1}$ being equal to zero. 

If $a_1<a$, the degree of $(\tau ^a D) (x+y)^{a_3}$ is $a_1+a_3=2t+a_1-a-2$, so (\ref{cond2}) is equivalent to the coefficient of $x^{t+a_1-a-1}y^{t-1}$ being equal to zero.

Our strategy is to write the relevant coefficient as a (multiple of a) polynomial in the variables $a_1, a_2$, which we will denote $\mathcal F(a_1, a_2)$, and analyze the integer solutions for $\mathcal F(a_1, a_2)=0.$

$$
\tau^1 D=a_1y-a_2x,
$$
$$\tau^2 D=\left\lbrace \begin{array}{lcc} (a_2^2-a_2)x^2-2a_1a_2xy+(a_1^2-a_1)y^2 & \mathrm{if} & a_1\ge 2\\  (a_2^2-a_2)x-(a_1a_2+a_2)y & \mathrm{if} & a_1=1 \end{array} \right.$$
\footnotesize
\begin{equation*}
    \tau^{3}D=\begin{cases}
        (-a_{2}^{3}+3a_{2}^{2}-2a_{2})x^{3}+(3a_{1}a_{2}^{2}-3a_{1}a_{2})x^{2}y+(-3a_{1}^{2}a_{2}+3a_{1}a_{2})xy^{2}+(a_{1}^{3}-3a_{1}^{2}+2a_{1})y^{3} & \text{if}\ a_{1}\geq 3, \\
        (-a_{2}^{3}+3a_2^{2}-2a_{2})x^{2}+(2a_{1}a_{2}^{2}-2a_{1}a_{2}+2a_{2}^{2}-2a_{2})xy-(a_{1}^{2}a_{2}+a_{1}a_{2})y^{2} & \text{if}\ a_{1}=2, \\
        (-a_{2}^{3}+3a_{2}^{2}-2a_{2})x+(a_{1}a_{2}^{2}-a_{1}a_{2}+2a_{2}^{2}-2a_{2})y & \text{if}\ a_{1}=1.
    \end{cases}
\end{equation*}
\normalsize
We will use the notation $\mathrm{cff}(x^iy^j, \tau^aD)$ to indicate the coefficient of $x^iy^j$ in $\tau^aD$.

{\bf The case $a=1$:} the coefficient of $x^{t-1}y^{t-1}$ in $(\tau ^1 D)(x+y)^{a_3}$ is equal to 
$$
\binom{a_3}{\frac{a_3-1}{2}}\mathrm{cff}(x, \tau^1D)+\binom{a_3}{\frac{a_3+1}{2}}\mathrm{cff}(y, \tau^1D)=A(a_2-a_1)
$$
where $A=\binom{a_3}{\frac{a_3+1}{2}}=\binom{a_3}{\frac{a_3-1}{2}}$.
We have $\mathcal{F}(a_1, a_2):=a_1-a_2=0\Leftrightarrow a_1=a_2$.

{\bf The case $a=2$:}
We need to consider the cases $a_1\ge 2$ and $a_1=1$ separately.

Assume $a_1\ge 2$. The coefficient of $x^{t-1}y^{t-1}$ in $(\tau^2 D)(x+y)^{a_3}$ is equal to 
$$
\mathcal C:=\binom{a_3}{\frac{a_3}{2}-1}\mathrm{cff}(x^2, \tau^2D)+\binom{a_3}{\frac{a_3}{2}} \mathrm{cff}(xy, \tau^2D)+\binom{a_3}{\frac{a_3}{2}+1}\mathrm{cff}(y^2, \tau^2D)
$$

Let $A:=\binom{a_3}{\frac{a_3}{2}}$. Then $\binom{a_3}{\frac{a_3}{2}-1}=\binom{a_3}{\frac{a_3}{2}+1}= \frac{(\frac{a_3}{2})}{(\frac{a_3}{2}+1)} A=\frac{a_3}{a_3+2}A$, therefore 
$$
\mathcal C=\frac{A}{a_3+2}(a_3(\mathrm{cff}(x^2, \tau^2D)+\mathrm{cff}(y^2, \tau^2D))+(a_3+2)\mathrm{cff}(xy, \tau^2D))
$$
Substituting $a_3=2(a_1+a_2)-6$ and dividing by 2, we have 
$$\mathcal C=0 \Leftrightarrow \mathcal{F}(a_1, a_2):=(a_1+a_2-3)(a_1^2+a_2^2-a_1-a_2)-2(a_1+a_2-2)a_1a_2=0$$
Using $S:=a_1+a_2$ and $P:=a_1a_2$, we have $\mathcal F=S^3-4S^2+3S-(4S-10)P$.
If $S, P$ are integers that satisfy $\mathcal F=0$, then $2S-5\, | \, S^3-4S^2+3S$. Modulo $2S-5$, we have $2S\equiv 5$, so 
$2S-5\, | \, 8S^3-32S^2+24S=(2S)^3-8(2S)^2+12(2S)\equiv 
5^3-8\cdot 5^2+12\cdot 5=15$ implies that $2S-5\, | \, 15$. Keeping in mind that $S=a_1+a_2\ge 2$, the possible values for $2S-5$ are $2S-5\in \{-1, 1, 3, 5, 15\}$, so $S\in \{2, 3, 4, 5, 10\}$. For each of these possible values of $S$, the corresponding values of $P=(S^3-4S^2+3S)/(4S-10)$ are $1, 0, 2, 4, 21$ and the resulting $(a_1, a_2)$ are $(1,1), (0,3), (\text{non-integer}), (1,4), (3,7)$. The only solution satisfying $a_1\ge 2$ is $(a_1, a_2)=(3, 7)$, which gives $(a_1, a_2, a_3)=(3, 7, 14)$.

Now assume $a_1=1$.
The coefficient of $x^{t-2}y^{t-1}$ in $\tau^2D(x+y)^{a_3}$ is 
$$
\mathcal C:=\mathrm{cff}(x, \tau^2D)\binom{a_3}{\frac{a_3-2}{2}}+\mathrm{cff}(y, \tau^2D)\binom{a_3}{\frac{a_3}{2}}.
$$
Let $A:=\binom{a_3}{\frac{a_3}{2}}$. Then $\displaystyle \binom{a_3}{\frac{a_3-2}{2}}=A\frac{a_3}{a_3+2}$ and we have 
$$
\mathcal C=0\Leftrightarrow (a_2^2-a_2)a_3-(a_1a_2+a_2)(a_3+2)=0
$$
We divide by $a_2$ and substitute $a_1=1$ and $a_3=2a_2-4$ to see that 
$$
\mathcal C=0\Leftrightarrow 2(a_2-4)(a_2-1)=0
$$
One solution is $a_2=1$, which is not possible because it implies $a_3=-2$. The other solution is $a_2=4$, which gives $(a_1, a_2, a_3)=(1, 4, 4).$

{\bf The case $a=3$:} We need to consider the cases $a_1\ge 3$, $a_1=2$ and $a_1=1$ separately.

Assume $a_1\ge 3$. The coefficient of $x^{t-1}y^{t-1}$ in $(\tau^3D)(x+y)^{a_3}$ is
$$
\mathcal C:=\mathrm{cff}(x^3, \tau^3D)\binom{a_3}{\frac{a_3-3}{2}} +\mathrm{cff}(x^2y, \tau^3D)\binom{a_3}{\frac{a_3-1}{2}} 
$$
$$
+\mathrm{cff}(xy^2, \tau^3D)\binom{a_3}{\frac{a_3+1}{2}}+\mathrm{cff}(y^3, \tau^3D)\binom{a_3}{\frac{a_3+3}{2}}
$$
Let $A:=\binom{a_3}{\frac{a_3-1}{2}}$ and $B:=\binom{a_3}{\frac{a_3-3}{2}}$.
Note $B=\frac{a_3-1}{a_3+3}A$.

Then 
$$
\mathcal C=(\mathrm{cff}(x^3, \tau^3D)+\mathrm{cff}(y^3, \tau^3D))B +(\mathrm{cff}(x^2y, \tau^3D)+\mathrm{cff}(xy^2, \tau^3D))A=0\Leftrightarrow
$$
$$(\mathrm{cff}(x^3, \tau^3D)+\mathrm{cff}(y^3, \tau^3D))(a_3-1) +(\mathrm{cff}(x^2y, \tau^3D)+\mathrm{cff}(xy^2, \tau^3D))(a_3+3)=0
$$
Substituting $a_3=2(a_1+a_2)-9$ and dividing by 2, we have 
$$
\mathcal C=0\Leftrightarrow \mathcal G(a_1, a_2):=(a_1+a_2-5)(a_1^3-a_2^3-3a_1^2+3a_2^2+2a_1-2a_2)+(a_1+a_2-3)(-3a_1^2a_2+3a_1a_2^2)=0.
$$
Note that $\mathcal G(a_1, a_2)$ is divisible by $a_1-a_2$, so every $a_1=a_2$ is a solution for $\mathcal G(a_1, a_2)=0$.
Dividing by $a_1-a_2$ leaves $$\mathcal{F}(a_1, a_2):=a_1^3-a_1^2a_2-a_1a_2^2+a_2^3-8a_1^2-2a_1a_2-8a_2^2+17a_1+17a_2-10.$$
Using $S:=a_1+a_2$ and $P:=a_1a_2$, we have $$\mathcal F= S(S^2-3P)-SP-8(S^2-2P)-2P+17S-10 $$
so $\mathcal F=0\Leftrightarrow S^3-8S^2+17S-10=4SP-14P$.
This implies $2S-7\, | \, S^3-8S^2+17S-10$. Modulo $2S-7$, we have $2S\equiv 7$, so 
$$
2S-7\, | \, 8S^3-64S^2+17\cdot 8 S -80 = (2S)^3-16\cdot (2S)^2+68(2S)-80 \equiv 7^3-16\cdot 7^2+68\cdot 7-80 = -45.$$
It remains to check the divisors of 45 as possible values of $2S-7$.
As $a_3=2S-9\ge 1$, we have $2S-7\in\{3, 5, 9, 15, 45\}$, i.e. $S\in\{5, 6, 8, 11, 26\}$. The corresponding values of $P=(S^{3}-8S^{2}+17S-10)/(4S-14)$ are given by $0$, $2$, $7$, $18$, $140$ and the resulting $(a_{1},a_{2})$ are: $(0,5), (\text{non-integer}), (1,7), (2,9), (\text{non-integer})$. In particular, there are no integer solutions with $a_1\ge 3$.

Now assume $a_1=2$. We must consider the coefficient of $x^{t-2}y^{t-1}$ in $(\tau^{3}D)(x+y)^{a_{3}}$. This is
\begin{equation*}
    \mathcal{C}:=\text{cff}(x^{2},\tau^{3}D)\binom{a_{3}}{\frac{a_{3}-3}{2}}+\text{cff}(xy,\tau^{3}D)\binom{a_{3}}{\frac{a_{3}-1}{2}}+\text{cff}(y^{2},\tau^{3}D)\binom{a_{3}}{\frac{a_{3}+1}{2}}.
\end{equation*}
Set $A=\binom{a_{3}}{\frac{a_{3}-1}{2}}=\binom{a_{3}}{\frac{a_{3}+1}{2}}$, which makes the binomial coefficient in the first term expressible as $\frac{a_{3}-1}{a_{3}+3}A$. With this identification, we have $\mathcal{C}=0$ if and only if
\begin{equation*}
    (a_{3}-1)\text{cff}(x^{2},\tau^{3}D)+(a_{3}+3)\text{cff}(xy,\tau^{3}D)+(a_{3}+3)\text{cff}(y^{2},\tau^{3}D)=0.
\end{equation*}
As $a_{1}=2$, we have $a_{3}=2a_{2}-5$. Substituting these values into the relation described above, we obtain
\begin{equation*}
    -2a_{2}^{4}+24a_{2}^{3}-58a_{2}^{2}+36a_{2}=0\quad\implies\quad -2a_{2}(a_{2}-1)(a_{2}-2)(a_{2}-9)=0.
\end{equation*}
However, $a_{2}\leq 2$ would imply $a_{3}<0$, an obvious impossibility. Thus, the only  ``viable'' root is $a_{2}=9$, which gives rise to $(a_{1},a_{2},a_{3})=(2,9,13)$.

Finally, assume $a_1=1$. We examine the coefficient of $x^{t-3}y^{t-1}$ in $(\tau^{3}D)(x+y)^{a_{3}}$. This is
\begin{equation*}
    \mathcal{C}:=\text{cff}(x,\tau^{3}D)\binom{a_{3}}{\frac{a_{3}-3}{2}}+\text{cff}(y,\tau^{3}D)\binom{a_{3}}{\frac{a_{3}-1}{2}}.
\end{equation*}
Keeping $A=\binom{a_{3}}{\frac{a_{3}-1}{2}}$, it follows that $\mathcal{C}=0$ precisely when
\begin{equation*}
    (a_{3}-1)\text{cff}(x,\tau^{3}D)+(a_{3}+3)\text{cff}(y,\tau^{3}D)=0.
\end{equation*}
As $a_{1}=1$, we have $a_{3}=2a_{2}-7$. Substituting these values into the relation described above, we obtain
\begin{equation*}
    -2a_{2}^{4}+20a_{2}^{3}-46a_{2}^{2}+28a_{2}=0\quad\implies\quad -2a_{2}(a_{2}-1)(a_{2}-2)(a_{2}-7)=0.
\end{equation*}
However, $a_{2}\leq 3$ would imply $a_{3}<0$, which is absurd. This leaves only the root $a_{2}=7$, corresponding to an $(a_{1},a_{2},a_{3})$ triple with two equal entries, namely $(1,7,7)$.
\end{proof}

We point out that the idea of the proof of \autoref{borderline} can be used to tackle larger values of $a$. For a fixed value of $a$, one has a polynomial $\mathcal F(a_1, a_2)$ constructed as above, which is symmetric in $a_1$ and $a_2$. This can then be expressed a polynomial in the elementary symmetric functions $S=a_1+a_2$ and $P=a_1a_2$. We have implemented the construction for the polynomial obtained in the case $a_1\ge a$ in \texttt{Macaulay2}, using the package ``SymmetricPolynomials,'' and have observed experimentally that when $\mathcal F(a_1, a_2)$ is expressed as a polynomial in $P$ with coefficients in $\F[S]$, the constant term is
\begin{equation*}
    \mathcal{P}(S):=\begin{cases}
        \prod\limits_{i=1}^{a-1}(S-i)\prod\limits_{i=\frac{3a+1}{2}}^{2a-1}(S-i) & \text{for}\ a\ \text{odd}, \\[20pt]
        \prod\limits_{i=0}^{a-1}(S-i)\prod\limits_{i=\frac{3a}{2}}^{2a-1}(S-i) & \text{for}\ a\ \text{even},
    \end{cases}
\end{equation*}
and all the other coefficients of powers of $P$ are multiples of $2S-c(a)$, where $c(a)=3a-2$ for $a$ odd and $c(a)=3a-1$ for $a$ even. If $\mathcal F(a_1, a_2)=0$, it follows that $2S-c(a)$ must divide $\mathcal P(S)$. Similar to the argument in the proof of \autoref{borderline}, this gives a finite list of possible values of $S$. For each possible value of $S$ from this list, $P$ must divide $\mathcal P(S)$. Therefore, we will have a finite list of values of $S$ and $P$ to consider, giving rise to a finite list of values of $a_1, a_2$. Unfortunately, for values of $a\ge 7$ the list of values that need to be checked is too large to be practical. Using \texttt{Macaulay2}, we were able to verify for values of $a$ up to 6 that there are no positive integer solutions for $\mathcal{F}(a_1, a_2)=0$, except for the known exceptions when failure of WLP has been established. For larger values of $a$, we were not able to finish checking all possible values even with the help of \texttt{Macaulay2} (although we have verified all possible values of $S$ up to 1000 for $a$ up to 40). \vspace{15pt}

\begin{center}
    \textbf{\textit{Acknowledgments}}
\end{center}\vspace{5pt}

We wish to credit two resources that we consulted numerous times in the development of these results: the On-Line Encyclopedia of Integer Sequences (\cite{OEIS}) and the computer algebra system \texttt{Macaulay2} (\cite{Grayson-Stillman}). The former resource was particularly helpful in detecting patterns (often involving binomial coefficients) from the many examples we constructed. The latter resource was immensely helpful for computing examples, gathering experimental evidence, and allowing us to much more efficiently formulate the content in this paper. Finally, we also extend our thanks to the two anonymous referees for their feedback, particularly pointing out places where we could improve our exposition.

\printbibliography

\end{document}